\newtheorem{theo}{Theorem}[section]
\newtheorem{thm}[theo]{Theorem}
\newtheorem{lemma}[theo]{Lemma}
\newtheorem{fact}[theo]{Fact}
\newtheorem{nota}[theo]{Notation}
\newtheorem{propo}[theo]{Proposition}
\newtheorem{defi}[theo]{Definition}
\newtheorem{cor}[theo]{Corollary}
\newtheorem{rem}[theo]{Remark}
\newtheorem{remark}[theo]{Remark}
\newcommand\lsnk{LS(\ck)}
\newcommand\slesseq{\trianglelefteq}
\newcommand\sless{\triangleleft}
\newcommand\sgreat{\triangleright}
\newcommand\sgeq{\trianglerighteq}
\newcommand\Mod{\operatorname{Mod}}
\newcommand\Homk{\operatorname{Hom}_\ck}
\newcommand\Set{\operatorname{\bf Set}}
\newcommand\Str{\operatorname{\bf Str}}
\newcommand\cof{\operatorname{cf}}
\newcommand\cu{\mathcal {U}}
\newcommand\ck{\mathcal {K}}
\newcommand\cl{\mathcal {L}}
\newcommand\cm{\mathcal {M}}
\newcommand\cp{\mathcal {P}}
\newcommand{\cf}[1]{\text{cf}{(#1)}}
\newcommand{\pres}[2]{\operatorname{\bf Pres}_{#1}(#2)}
\newcommand{\Red}[3]{\operatorname{\bf Red}_{#1,#2}(#3)}
 \newbox\noforkbox \newdimen\forklinewidth
\noforkbox\hbox{\lower 2pt\box1\lower
2pt\box0\relax}
\title[A category-theoretic characterization of almost measurable cardinals]
      {A category-theoretic characterization of almost measurable cardinals}
\date{\today\\
AMS 2010 Subject Classification: Primary: 03E55, 18C35. Secondary: 03E75, 03C20, 03C48, 03C75.
}
\keywords{almost measurable cardinals, accessible categories, abstract elementary classes, Galois types, locality}
\author[M. Lieberman]{Michael Lieberman}
\email{lieberman@math.muni.cz}
\urladdr{http://www.math.muni.cz/\textasciitilde lieberman/}
\address{Department of Mathematics and Statistics, Faculty of Science, Masaryk University, Brno, Czech Republic}
\address{Department of Mathematics, Faculty of Mechanical Engineering, Brno University of Technology, Brno, Czech Republic}
\thanks{The author is supported by the Grant Agency of the Czech Republic under the grant P201/12/G028.}
\begin{document}

\begin{abstract} Through careful analysis of an argument of \cite{BT-R}, we show that the powerful image of any accessible functor is closed under colimits of $\kappa$-chains, $\kappa$ a sufficiently large almost measurable cardinal.  This condition on powerful images, by methods resembling those of \cite{LRclass}, implies $\kappa$-locality of Galois types.  As this, in turn, implies sufficient measurability of $\kappa$, via \cite{boun}, we obtain an equivalence: a purely category-theoretic characterization of almost measurable cardinals.
\end{abstract} 

\maketitle


\section{Introduction}\label{secintro}

This paper fits into the emerging stream of research on the interactions between large cardinals, abstract model theory, and accessible categories.  The aim of this broader project is, in short, to find natural model-theoretic and category-theoretic equivalents of large cardinal principles.  An extraordinary early success along these lines is the following result of Boney and Unger:

\begin{thm}[\cite{boun} 4.14]\label{thmboun}
	Let $\kappa$ be an infinite (regular) cardinal such that $\mu^\omega<\kappa$ for all $\mu<\kappa$.  The following are equivalent:
	\begin{enumerate}
		\item $\kappa$ is almost strongly compact.
		\item The powerful image of any accessible functor $F:\ck\to\cl$, $\cl$ essentially below $\kappa$, is $\kappa$-accessible.
		\item Every {\em abstract elementary class} (or {\em AEC}) essentially below $\kappa$ is $<\kappa$-tame.
	\end{enumerate}
\end{thm}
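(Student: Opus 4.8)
\emph{Proof idea.} The plan is to close the cycle of implications $(1) \Rightarrow (3) \Rightarrow (2) \Rightarrow (1)$. Throughout one leans on two tools: the standard dictionary translating an abstract elementary class essentially below $\kappa$ into an accessible category essentially below $\kappa$, all of whose morphisms are monic and which has directed colimits of such; and the ultraproduct machinery for these classes, which is available precisely because the hypothesis $\mu^{\omega} < \kappa$ for every $\mu < \kappa$ keeps the relevant completeness degrees below $\kappa$. I take ``$\kappa$ almost strongly compact'' in the form: for every $\mu < \kappa$, every $\kappa$-complete filter (on any set) extends to a $\mu$-complete ultrafilter.

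\emph{$(1) \Rightarrow (3)$.} This is the classical ``compactness yields tameness'' argument. Fix an AEC $\K$ with $\LS(\K) < \kappa$, a model $M$, and elements $a, b$ with $\gtp(a/M) \neq \gtp(b/M)$; assuming $(3)$ fails, these types agree on every $N_0 \lea M$ of size $< \kappa$, and one seeks a contradiction. Since $\K$ is essentially below $\kappa$ it is closed under $\mu$-complete ultraproducts for all $\mu < \kappa$, and $\mu^{\omega} < \kappa$ lets one take $\mu$ beyond the presentation number of $\K$; almost strong compactness then supplies, for such a $\mu$, a $\mu$-complete ultrafilter $U$ on the set of submodels $N_0 \lea M$ of size $< \kappa$ that contains each set $\{\, N_0 : N_1 \lea N_0 \,\}$ for $N_1 \lea M$ of size $< \kappa$. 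Forming the $U$-ultraproduct of the amalgams witnessing equality of the restricted types and invoking the {\L}o{\'s}-style theorem for AECs produces a single amalgam over the canonical copy of $M$ inside $\prod_{U} N_0$ in which the images of $a$ and $b$ become conjugate, contradicting $\gtp(a/M) \neq \gtp(b/M)$.

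\emph{$(3) \Rightarrow (2)$.} Let $F : \ck \to \cl$ be accessible with $\cl$ essentially below $\kappa$. Following \cite{BT-R} for the reduction and the methods of \cite{LRclass} for the type-theoretic reading, I would present the powerful image of $F$ as (equivalent to) a category of Galois types of an auxiliary AEC $\K_F$ assembled from the comma categories $L \downarrow F$: an object $L \in \cl$ lies in the powerful image exactly when a distinguished orbit attached to $L$ in $\K_F$ is total, and $\K_F$ is itself essentially below $\kappa$, its L\"owenheim--Skolem number being bounded in terms of the accessibility ranks of $\cl$ and $F$. Hypothesis $(3)$ then makes $\K_F$ $<\kappa$-tame, so membership in the powerful image is decided on subobjects of size $< \kappa$; the usual presentation-theorem bookkeeping --- where $\kappa$'s closure under the pertinent cardinal arithmetic is used once more --- upgrades the conclusion from accessibility of the powerful image to $\kappa$-accessibility.

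\emph{$(2) \Rightarrow (1)$, the crux.} I would argue contrapositively: suppose some $\mu < \kappa$ and some $\kappa$-complete filter $\ci$ on a set $X$ admit no $\mu$-complete ultrafilter extension; one must then produce an accessible functor, with codomain essentially below $\kappa$, whose powerful image is not $\kappa$-accessible. The natural witness is the forgetful functor from a category whose objects are $X$-indexed structures equipped with a partial $\mu$-complete proto-ultrafilter refining $\ci$, into the same structures with that data erased, arranged so that an object lands in the powerful image precisely when its filter completes to a $\mu$-complete ultrafilter. Propagating the obstruction along a $\kappa$-chain then produces a colimit escaping the powerful image although every proper approximation lies inside it --- so the powerful image is not closed under colimits of $\kappa$-chains, hence \emph{a fortiori} not $\kappa$-accessible. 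The real difficulties, as I see them, are (i) trimming the witnessing category so that it is honestly accessible with codomain honestly essentially below $\kappa$, which forces the encoding of partial ultrafilters to be bounded and is where $\mu^{\omega} < \kappa$ re-enters; and (ii) calibrating the completeness level of the ultrafilter extracted from a failure of $\kappa$-accessibility so that it comes out $\mu$-complete rather than merely $\mu'$-complete for some $\mu' < \mu$ --- getting (ii) right is the heart of the matter.
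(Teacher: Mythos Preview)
The paper does not actually prove this theorem in detail: it is quoted from \cite{boun} as background, and the paragraph following the statement records the cycle of implications as it appears in the literature, namely $(1)\Rightarrow(2)\Rightarrow(3)\Rightarrow(1)$. Specifically, $(1)\Rightarrow(2)$ is \cite{makkai-pare} sharpened to almost strong compactness by \cite{BT-R}; $(2)\Rightarrow(3)$ is \cite{LRclass}, where tameness is recast as a statement about a specific powerful image; and the hard step $(3)\Rightarrow(1)$ is the explicit combinatorial AEC built in \cite{boun} (refining \cite{shelahmeas}) whose tameness forces the existence of the required complete ultrafilters.

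Your cycle runs the other way, $(1)\Rightarrow(3)\Rightarrow(2)\Rightarrow(1)$, and two of its legs have genuine gaps. For $(3)\Rightarrow(2)$ you propose to realise the powerful image of an \emph{arbitrary} accessible functor $F$ as the locus where a certain Galois type in an auxiliary AEC $\K_F$ is ``total,'' so that $<\kappa$-tameness of $\K_F$ decides membership in the powerful image on $<\kappa$-presentable pieces. But \cite{LRclass} goes in the opposite direction: it shows that equality of Galois types is itself a powerful-image question, so that accessibility of powerful images implies tameness. There is no general mechanism in that paper (or in \cite{BT-R}) for encoding an arbitrary powerful image as a tameness problem, and your sketch does not supply one; in particular, it is unclear why the auxiliary $\K_F$ would be an AEC (with \emph{finitary} directed colimits) rather than merely a $\mu$-AEC, and tameness hypotheses for $\mu$-AECs are not what (3) provides.

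For $(2)\Rightarrow(1)$ you attempt to build directly a functor whose powerful image encodes extendibility of a $\kappa$-complete filter to a $\mu$-complete ultrafilter. This is an interesting idea, but, as you yourself flag, the calibration problem (ii) is serious, and no such category-theoretic construction is known. In the literature this implication is obtained by passing through (3): \cite{boun} builds a concrete AEC from the given filter so that $<\kappa$-tameness of that AEC yields the desired ultrafilter. If you want a self-contained argument, the efficient route is to follow the established cycle $(1)\Rightarrow(2)\Rightarrow(3)\Rightarrow(1)$ rather than its reverse.
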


The meaning of ``essentially below'' in each case will become clear in the sequel.  In unparametrized form, the theorem above implies that the existence of a proper class of almost strongly compact cardinals is equivalent to the tameness of AECs, and to the accessibility of powerful images of accessible functors.  The most difficult step of this equivalence, (3) implies (1), is proven in \cite{boun} via an improved version of the construction and proof in \cite{shelahmeas}.  The implication from (2) to (3) follows from work in \cite{LRclass}, in which tameness is reformulated as a question involving powerful images.  That accessibility of powerful images follows from strongly compact cardinals has long been known (see \cite{makkai-pare}), while the weakening to almost strongly compacts is due to \cite{BT-R}---the product, in essence, of a careful rereading of \cite{makkai-pare}.

We wish, instead, to obtain a similar result for weakenings of the notion of {\em measurable cardinal}.  We note that such weakenings have been considered to some degree in connection with AECs and, in particular, with locality---rather than tameness---of Galois types therein (cf. \cite{boun}).  Relatively little is known about how the properties of accessible categories change under the assumption of the existence (or nonexistence) of measurable cardinals, let alone their weakenings.  The current state of knowledge seems not to extend beyond the following:

\begin{fact}
	\begin{enumerate}
		\item If every accessible category is co-well-powered, i.e. every object has a set of quotients, then there is a proper class of measurable cardinals, \cite[A.19]{adamek-rosicky}.
		\item The dual category of the category of sets, $\Set^{op}$, is bounded if and only if there are boundedly many measurable cardinals, \cite[A.5]{adamek-rosicky}.
	\end{enumerate}
\end{fact}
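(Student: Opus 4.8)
The plan is to recognize both items as classical results from the large-cardinal appendix of \cite{adamek-rosicky}, and --- since this Fact appears here only as background --- to reconstruct the governing mechanism rather than reprove it in detail. In each case the dividing line is the existence of nonprincipal $\lambda$-complete ultrafilters on arbitrarily large sets, which is precisely what a proper class of measurable cardinals supplies: the least cardinal carrying a nonprincipal $\lambda$-complete ultrafilter (when one exists) is a measurable cardinal $\geq\lambda$.

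For (2) I would argue inside $\Set^{op}$ itself, using that a $\lambda$-filtered colimit there is a $\lambda$-cofiltered limit of sets. The core computation is that an object $X$ of $\Set^{op}$ fails to be $\lambda$-presentable exactly when there is a nonprincipal $\lambda$-complete ultrafilter on a set of size $|X|$: such an ultrafilter yields a map out of a $\lambda$-cofiltered limit of sets that factors through no stage, so that $\Set^{op}(X,-)$ does not preserve the corresponding colimit. Hence whether $\Set^{op}$ is bounded --- whether for \emph{some} regular $\lambda$ every object is a $\lambda$-filtered colimit of objects from a fixed set --- reduces to whether $\lambda$ can be chosen above every measurable cardinal, i.e. to the measurables forming a set. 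Concretely, for the forward implication one takes all measurables $<\lambda$ and checks that the canonical diagram of $<\lambda$-sized quotients of $X$ in $\Set^{op}$ is $\lambda$-filtered with colimit $X$; for the converse one reads off from a measurable $\mu\geq\lambda$, via a $\mu$-complete ultrafilter, an object of size $\geq\mu$ that is not a $\lambda$-filtered colimit of any small family.

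For (1) I would pass to the contrapositive: assuming all measurables are $<\lambda$, build an accessible category containing an object possessing a proper class of pairwise non-isomorphic quotients. The input is a Gaifman--Hales-type boundedness --- under this hypothesis suitable free ($\lambda$-complete) structures on small generating sets remain set-sized --- which is what permits one to present, as an accessible category (equivalently as the models of a small sketch, or as algebras/coalgebras for a $\lambda$-accessible endofunctor of $\Set$), a category in which one object maps onto a proper class of non-isomorphic targets; the converse half, that a proper class of measurables breaks co-well-poweredness by contributing a new quotient at each measurable, is the easier direction and is essentially in the literature. It is worth recording the contrast with well-poweredness, which holds for every accessible category, so that the large-cardinal strength here is genuine.

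The step I expect to be the real obstacle is not the ultrafilter heuristic, which is transparent, but the combinatorial bookkeeping that converts it into a theorem: in (2), pinning down the correct canonical $\lambda$-filtered diagram and verifying its colimit in $\Set^{op}$ (and the symmetric failure analysis); in (1), producing a \emph{bona fide} accessible category, with an explicit small presentation, whose co-well-poweredness visibly collapses to a bound on the measurables. Both are carried out in \cite[A.5, A.19]{adamek-rosicky}, so for present purposes I would simply cite them --- the Fact enters only to motivate seeking a measurable-cardinal analogue of Theorem~\ref{thmboun}.
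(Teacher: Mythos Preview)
The paper gives no proof of this Fact at all: it is stated purely as background, with bare citations to \cite[A.5, A.19]{adamek-rosicky}, and the very next sentence of the paper notes that the converse to (1) is \emph{not known}. Your final paragraph reaches the same conclusion---simply cite---so in that sense you agree with the paper's ``proof.''

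That said, your sketch of (1) contains a genuine slip worth flagging. You write that ``the converse half, that a proper class of measurables breaks co-well-poweredness by contributing a new quotient at each measurable, is the easier direction and is essentially in the literature.'' This is confused on two counts. First, what you describe is not the converse of (1): the converse would assert that a proper class of measurables \emph{ensures} co-well-poweredness of every accessible category, not that it breaks it. Second, and more importantly, the paper explicitly records that the converse to (1) is open, so it is certainly not ``essentially in the literature.'' Your contrapositive formulation (boundedly many measurables $\Rightarrow$ an accessible category that fails co-well-poweredness) is the correct restatement of (1) itself, and your heuristic for it---free $\lambda$-complete structures staying set-sized under a bound on measurables---is the right intuition; but the additional ``converse half'' remark should be deleted.
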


It is not known whether the converse to (1) holds, and (2) is elegant, but appears to be of limited practical applicability.  We aim to refine this picture by exhibiting a useful and relatively straightforward category-theoretic property which is equivalent to (almost) measurability of a single cardinal.  In particular, the culmination of this piece is an equivalence paralleling Theorem~\ref{thmboun} above, linking almost measurable cardinals, locality of Galois types in AECs, and the closure of powerful images of accessible functors under colimits of suitable chains.  

As we work in the intersection of several technical fields, we assume the reader has a certain amount of familiarity with abstract model theory in the form of AECs or $\mu$-AECs (cf. \cite{baldwinbk} and \cite{mu-aec-jpaa}, respectively), accessible categories (cf. \cite{adamek-rosicky}), and the rudiments of the theory surrounding large cardinals (c.f. \cite{jechbook}).  We include only a brief review of the relevant terminology, in Section~\ref{secprelims} below.

In Section~\ref{secaccims}, we modify the argument of \cite{BT-R} so that it applies in case $\kappa$ is a $\mu$-measurable cardinal (Definition~\ref{defmeasurable}(1)).  From this we are able to derive that powerful images of $\lambda$-accessible functors below $\mu$ are $\kappa$-preaccessible and closed under colimits of $\kappa$-chains (Theorem~\ref{thmmeasmono}).  In Section~\ref{seclocal}, this closure under colimits of $\kappa$-chains is shown to imply $\kappa$-locality of AECs below $\mu$ (Theorem~\ref{thmmuaec}), by an argument closely resembling that of \cite{LRclass}.  Finally, we rely on a result of \cite{boun} to derive $\mu$-measurability of $\kappa$ from $\kappa$-locality, thereby completing the equivalence and providing the promised category-theoretic characterization of almost measurable cardinals: Theorem~\ref{almeasequiv}.  We note in passing that the distillation of the arguments of \cite{BT-R} and \cite{LRclass} employed here has since proved useful, too, in deriving level-by-level category-theoretic characterizations---again, given in terms of closure properties of powerful images---of cardinals in the range from weakly to strongly compact, see \cite{meandwill}.

The author wishes to acknowledge useful conversations with Will Boney, Ji\v r\'i Rosick\'y, and Sebastien Vasey, and to thank Andrew Brooke-Taylor for organizing the workshop ``Accessible categories and their connections'' at the University of Leeds, where the idea for this paper began to germinate.

\section{Preliminaries}\label{secprelims}

We concern ourselves with the following weakenings of the notion of measurable cardinal, found in \cite{boun}:

\begin{defi}\label{defmeasurable}
	Let $\kappa$ be an uncountable cardinal.
	\begin{enumerate}
		\item We say that $\kappa$ is {\em $\mu$-measurable} if there is a uniform $\mu$-complete ultrafilter on $\kappa$.
		\item We say that $\kappa$ is {\em almost measurable} if it is $\mu$-measurable for all $\mu<\kappa$.
		\item We say that $\kappa$ is {\em measurable} if it is $\kappa$-measurable.
	\end{enumerate}
\end{defi}

\begin{remark}\label{unifnonsing}
{\em The assumption that the ultrafilter is uniform, and not simply non-principal, is rather strong.  Building uniformity into the definition ensures that an almost measurable cardinal is either measurable or a regular limit of measurable cardinals (hence a strong limit cardinal), or, as suggested by the anonymous referee, the successor of a measurable cardinal.  If we were to assume merely that the ultrafilter is nonprincipal, an almost measurable cardinal could conceivably be a {\em singular} limit of measurable cardinals. We wish to avoid this possibility.}
\end{remark}

\begin{lemma}\label{uppersets}
	Let $\cu$ be a uniform ultrafilter on $\kappa$.  For any $i<\kappa$, the upper set
	$$[i,\kappa)=\{j\geq i\,|\,j<\kappa\}$$
	belongs to $\cu$.
\end{lemma}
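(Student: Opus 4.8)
The plan is to unwind the definition of uniformity and then invoke the ultrafilter dichotomy; there is essentially nothing else to do. Recall that an ultrafilter $\cu$ on $\kappa$ is \emph{uniform} precisely when every member of $\cu$ has cardinality $\kappa$ — equivalently, since $\cu$ is an ultrafilter, when $\cu$ contains every subset of $\kappa$ whose complement has size strictly less than $\kappa$.

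First I would observe that the complement of $[i,\kappa)$ in $\kappa$ is exactly $\{j : j < i\}$, i.e. the von Neumann ordinal $i$ itself. Since $i < \kappa$ and $\kappa$ is a cardinal, we have $|i| \le i < \kappa$, so this complement has cardinality strictly below $\kappa$. By uniformity it therefore cannot belong to $\cu$. Since $\cu$ is an ultrafilter on $\kappa$, exactly one of $i$ and its complement $\kappa \setminus i = [i,\kappa)$ lies in $\cu$; hence $[i,\kappa) \in \cu$.

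I do not expect any genuine obstacle: the only points meriting a moment's care are the verification that $|i| < \kappa$ (immediate, as $\kappa$ is a cardinal) and the remark that the two standard formulations of uniformity agree for ultrafilters. If one instead takes ``$\cu$ contains all co-small subsets of $\kappa$'' as the definition of uniform, the statement is literally a special case and no argument is required at all.
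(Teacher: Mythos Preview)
Your argument is correct and is essentially identical to the paper's: both note that the complement $[0,i)$ has size $<\kappa$, conclude by uniformity that it is not in $\cu$, and then apply the ultrafilter dichotomy to get $[i,\kappa)\in\cu$. The only difference is cosmetic---you spell out the ultrafilter step and the cardinality bound a bit more explicitly.
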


\begin{proof}
	For any $i<\kappa$, $|[0,i)|<\kappa$, hence, by uniformity of $\cu$, $[0,i)\not\in\cu$.  So
	$$[i,\kappa)=\kappa\setminus [0,i)\in\cu.$$ 
\end{proof}

\begin{lemma}\label{cofmumeas}
If $\kappa$ is $\mu$-measurable, $\cf{\kappa}\geq\mu$.	
\end{lemma}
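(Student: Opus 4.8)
The plan is to argue by contradiction, extracting an empty set from $\cu$ out of a too-short cofinal sequence. Suppose $\cf{\kappa}<\mu$ and fix an increasing sequence $\seq{\alpha_i : i<\lambda}$ of ordinals below $\kappa$ with $\lambda:=\cf{\kappa}<\mu$ and $\sup_{i<\lambda}\alpha_i=\kappa$. By Lemma~\ref{uppersets}, each upper set $[\alpha_i,\kappa)$ belongs to $\cu$. Since $\cu$ is $\mu$-complete and $\lambda<\mu$, the intersection $\bigcap_{i<\lambda}[\alpha_i,\kappa)$ lies in $\cu$ as well. But an ordinal $j<\kappa$ can belong to this intersection only if $j\geq\alpha_i$ for every $i<\lambda$, i.e.\ only if $j\geq\sup_{i<\lambda}\alpha_i=\kappa$; so the intersection is empty. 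This contradicts the fact that $\cu$, being a proper ultrafilter, does not contain $\emptyset$. Hence $\cf{\kappa}\geq\mu$.

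Equivalently (and this is the form I would actually write down, to avoid even the appearance of needing a contradiction): for any $\lambda<\mu$ and any sequence $\seq{\alpha_i : i<\lambda}$ of ordinals below $\kappa$, the set $\bigcap_{i<\lambda}[\alpha_i,\kappa)\in\cu$ is in particular nonempty, so $\sup_{i<\lambda}\alpha_i<\kappa$; thus no sequence of length $<\mu$ is cofinal in $\kappa$, which is exactly the assertion $\cf{\kappa}\geq\mu$. Either way the only ingredients are Lemma~\ref{uppersets} and $\mu$-completeness.

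There is no real obstacle here. The one point deserving a word is the degenerate case $\mu\leq\omega$, in which $\mu$-completeness is vacuous and the argument above carries no information; but then $\cf{\kappa}\geq\omega\geq\mu$ holds automatically since $\kappa$ is an uncountable cardinal, so the statement is trivial in that range. I would state the proof for $\mu$ infinite and dispatch the finite case in a half-sentence.
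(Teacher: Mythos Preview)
Your proof is correct and follows essentially the same route as the paper: assume a cofinal sequence of length $\chi<\mu$, use Lemma~\ref{uppersets} to place every upper set $[i_\alpha,\kappa)$ in $\cu$, and invoke $\mu$-completeness to conclude that the empty intersection lies in $\cu$. The additional remarks on the direct reformulation and the degenerate case $\mu\leq\omega$ are fine but not needed.
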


\begin{proof}
	Suppose that $\cf{\kappa}=\chi<\mu$, and let $\{i_\alpha\,|\,\alpha<\chi\}$ be a cofinal sequence in $\kappa$.  Let $\cu$ be a uniform, $\mu$-complete ultrafilter on $\kappa$.  By Lemma~\ref{uppersets}, $[i_\alpha,\kappa)\in\cu$ for all $\alpha$.  Since $\cu$ is $\mu$-complete, it contains
	$$\bigcap_{\alpha<\chi}[i_\alpha,\kappa).$$
	But, of course, this intersection is empty.
\end{proof}

\begin{cor}\label{almostmeasreg}
	Any almost measurable cardinal is regular.
\end{cor}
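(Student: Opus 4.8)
The plan is to read this off directly from Lemma~\ref{cofmumeas}. Since $\kappa$ is almost measurable, it is $\mu$-measurable for every $\mu<\kappa$, and hence $\cf{\kappa}\geq\mu$ for every such $\mu$ by Lemma~\ref{cofmumeas}. All that remains is the elementary step from ``$\cf{\kappa}\geq\mu$ for all $\mu<\kappa$'' to ``$\cf{\kappa}=\kappa$''.

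To handle that step I would argue by contradiction. Suppose $\cf{\kappa}<\kappa$ and put $\chi=\cf{\kappa}$, a cardinal strictly below $\kappa$. If $\kappa$ were a successor cardinal it would already be regular, contradicting $\chi<\kappa$; so $\kappa$ is a limit cardinal, whence $\chi^+<\kappa$ as well. Then $\kappa$ is $\chi^+$-measurable, and Lemma~\ref{cofmumeas} applied with $\mu=\chi^+$ gives $\chi=\cf{\kappa}\geq\chi^+$, which is absurd. Therefore $\cf{\kappa}=\kappa$, i.e.\ $\kappa$ is regular. I do not expect any genuine obstacle here: all of the substance is already contained in Lemma~\ref{cofmumeas}, and what is left is only the short two-case verification above, which relies on nothing beyond that lemma and the regularity of successor cardinals.
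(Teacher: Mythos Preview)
Your proposal is correct and is exactly what the paper intends: the corollary is stated without proof immediately after Lemma~\ref{cofmumeas}, so the reader is expected to extract it just as you did. Your case split (successor versus limit, then apply the lemma with $\mu=\chi^+$) is the right way to fill in the one-line gap.
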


As we proceed by a careful analysis of the central proof of \cite{BT-R}, we also recall the cardinal notions at play there.  Notice, though, that we refer to these notions using the terminology of \cite[4.6]{BaMa}, as it is more in keeping with Definition~\ref{defmeasurable} (and, of course, \cite{boun}).

\begin{defi}\label{defcompact}
	Let $\kappa$ be an uncountable cardinal.
	\begin{enumerate}
		\item We say that $\kappa$ is {\em $\mu$-strongly compact} if any $\kappa$-complete filter on a set $I$ can be extended to a $\mu$-complete ultrafilter.  (In \cite[2.3]{BT-R}, such cardinals are called {\em $L_{\mu,\omega}$-compact}.)
		\item We say that $\kappa$ is {\em almost strongly compact} if it is $\mu$-strongly compact for all $\mu<\kappa$.
		\item We say that $\kappa$ is {\em strongly compact} if it is $\kappa$-strongly compact.
	\end{enumerate}
\end{defi}

We note that, given the recent proliferation of possible interpretations of ``$\mu$-strongly compact,'' there is some risk of confusion: we fix the interpretation above, and trust that this will not present a serious obstacle for the reader.

Finally, we briefly review the required terminology from category theory and abstract model theory.

\begin{defi}
	Let $\lambda$ be a regular cardinal.
	\begin{enumerate}
	\item An object $M$ in a category $\ck$ is {\em $\lambda$-presentable} if the associated hom-functor $\Homk(M,-)$ preserves $\lambda$-directed colimits.
	\item We say that a category $\ck$ is {\em $\lambda$-accessible} if it
		\begin{itemize}
			\item contains a set (up to isomorphism) of $\lambda$-presentable objects,
			\item every object of $\ck$ is a $\lambda$-directed colimit of $\lambda$-presentable objects, and
			\item $\ck$ has all $\lambda$-directed colimits.
		\end{itemize}
		\item We say that $\ck$ is $\lambda$-preaccessible if it satisfies only the first two conditions in (2) above.
		\item A functor $F:\ck\to\cl$ is {$\lambda$-accessible} if $\ck$ and $\cl$ are $\lambda$-accessible and $F$ preserves $\lambda$-directed colimits.
	\end{enumerate}
\end{defi}

\begin{nota}\label{pressubcat}
	Given a category $\cl$, we denote by $\pres{\lambda}{\cl}$ a full subcategory of $\cl$ containing exactly one object from each isomorphism class of $\lambda$-presentable objects in $\cl$.
\end{nota}

\begin{remark}\label{rmksharp}
	{\em A $\mu$-accessible category need not be $\kappa$-accessible for all regular $\kappa>\lambda$.  This is true, however, when $\kappa$ is {\it sharply greater than} $\mu$, denoted $\kappa\sgreat\mu$, a relation first defined in \cite[2.3.1]{makkai-pare}.  We refer the reader to \cite{makkai-pare} for a number of conditions equivalent to $\kappa\sgreat\mu$.  At a minimum, we note the following: if $\kappa>2^{<\mu}$, $\kappa\sgreat\mu$ just in case $\kappa$ is $\mu$-closed; that is, $\beta^{<\mu}<\kappa$ for all $\beta<\kappa$ (see \cite[2.6]{internal-sizes-v2}).} 
\end{remark}

As an immediate application, which will be of use later, we note that:

\begin{propo}\label{almeassharp}
	If $\kappa$ is an almost measurable cardinal, then for any regular $\mu<\kappa$, $\mu\sless\kappa$.
\end{propo}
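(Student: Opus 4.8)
The plan is to reduce the statement to the arithmetic criterion for the relation $\sgreat$ recorded in Remark~\ref{rmksharp}: as soon as $\kappa > 2^{<\mu}$, one has $\mu \sless \kappa$ (equivalently $\kappa \sgreat \mu$) precisely when $\kappa$ is $\mu$-closed, i.e. $\beta^{<\mu} < \kappa$ for all $\beta < \kappa$. So, fixing a regular $\mu < \kappa$, it suffices to verify (a) $2^{<\mu} < \kappa$ and (b) $\beta^{<\mu} < \kappa$ for every $\beta < \kappa$. I would carry this out by splitting into the three possibilities for an almost measurable cardinal isolated in Remark~\ref{unifnonsing}.

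First, suppose $\kappa$ is measurable or a regular limit of measurable cardinals. Then $\kappa$ is inaccessible: it is regular by Corollary~\ref{almostmeasreg} (or directly), and a strong limit either because measurable cardinals are inaccessible or because Remark~\ref{unifnonsing} already records strong-limitness in the limit-of-measurables case. Given inaccessibility, (a) and (b) are routine: for $\beta, \nu < \kappa$ the strong limit property yields $\beta^\nu \le 2^{\beta \cdot \nu} < \kappa$, and then $\beta^{<\mu} = \sup_{\nu < \mu} \beta^\nu < \kappa$ since $\mu < \kappa = \cf{\kappa}$; taking $\beta = 2$ gives (a).

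The case requiring attention is $\kappa = \lambda^+$ with $\lambda$ measurable, since $\kappa$ itself need not be a strong limit. Here $\lambda$ is inaccessible, $\mu \le \lambda$, and every $\beta < \kappa$ satisfies $\beta \le \lambda$. The arithmetic input I would use is that an inaccessible $\lambda$ satisfies $\lambda^\nu = \lambda$ for all $\nu < \lambda$: since $\nu < \cf{\lambda} = \lambda$, every $f : \nu \to \lambda$ is bounded, so ${}^\nu\lambda$ is a union of $\lambda$ sets ${}^\nu\alpha$ ($\alpha < \lambda$), each of cardinality at most $2^{|\alpha| \cdot \nu} < \lambda$; hence $\lambda^{<\lambda} = \lambda$. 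Consequently $\beta^{<\mu} = \sup_{\nu < \mu} \beta^\nu \le \sup_{\nu < \lambda} \lambda^\nu = \lambda < \kappa$, which is (b), while $2^{<\mu} \le \sup_{\nu < \lambda} 2^\nu = \lambda < \kappa$ by the strong limit property of $\lambda$, which is (a). Applying Remark~\ref{rmksharp} in each of the three cases then gives $\mu \sless \kappa$.

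The hard part will be only the bookkeeping in the successor case: one must make sure that the supremum defining $\beta^{<\mu}$ remains at $\lambda$ rather than climbing to $\lambda^+$ even when $\mu$ is as large as $\lambda$, and this is exactly where inaccessibility of $\lambda$, packaged as $\lambda^{<\lambda} = \lambda$, is indispensable. The first two cases become immediate once one observes that $\kappa$ is inaccessible.
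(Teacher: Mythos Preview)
Your proof is correct and follows the same overall strategy as the paper: invoke the three-way split from Remark~\ref{unifnonsing}, and in the inaccessible cases apply the arithmetic criterion of Remark~\ref{rmksharp} via the strong-limit bound $\beta^\alpha\leq 2^{\max(\alpha,\beta)}<\kappa$. The one genuine difference is in the successor case $\kappa=\theta^+$ with $\theta$ measurable. You compute directly, using $\theta^{<\theta}=\theta$ for the inaccessible $\theta$, to verify $\mu$-closure of $\kappa$. The paper instead observes that the inaccessible case already gives $\mu\sless\theta$ for regular $\mu<\theta$, then quotes the general fact $\theta\sless\theta^+$ from \cite[2.3.3]{makkai-pare} and the transitivity of $\sless$ to conclude $\mu\sless\kappa$ (with $\mu=\theta$ handled by $\theta\sless\theta^+$ alone). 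Your route is more self-contained, needing no imported properties of $\sless$ beyond the arithmetic criterion; the paper's route is shorter and avoids redoing the cardinal arithmetic, at the cost of citing two additional facts about the sharp-inequality relation.
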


\begin{proof}
	From Corollary~\ref{almostmeasreg}, $\kappa$ is regular. If $\kappa$ is measurable or a regular limit of measurables, one can easily see, using Remark~\ref{unifnonsing}, that $\kappa$ must be strong limit, i.e. $2^\nu<\kappa$ for all $\nu<\kappa$.  Let $\mu<\kappa$, $\mu$ regular.  For any $\alpha<\mu$ and $\beta<\kappa$, 
	$$\beta^\alpha\leq\max(2^\alpha,2^{\beta})<\kappa.$$
	So, by the final clause of Remark~\ref{rmksharp}, $\mu\sless\kappa$.  If $\kappa$ is the successor of a measurable cardinal, say $\kappa=\theta^+$, we have $\mu\sless\theta$ for all regular $\mu<\theta$, from the argument above.  Since $\theta\sless\theta^+=\kappa$ (see \cite[2.3.3]{makkai-pare}) and the relation $\sless$ is transitive, the result follows in this case, too.
\end{proof}

We will be concerned here with the properties of the images of accessible functors.  Chiefly,

\begin{defi}
	Given a functor $F:\ck\to\cl$, the powerful image of $\ck$, denoted $P(F)$, is the least full subcategory of $\cl$ containing $FA$ for all $A\in\ck$ and closed under $\cl$-subobjects.
\end{defi}

For the purposes of Section~\ref{secaccims}, we will also need a refinement of the notion of monomorphism, namely $\lambda$-pure morphisms, which are closely connected to, for example, the notion of pure subgroup.

\begin{defi}\label{defpure} Let $\lambda$ be a regular cardinal. A morphism $f:M\to N$ is {\em $\lambda$-pure} if for every commutative square
		$$  
      	\xymatrix@=3pc{
        	M \ar[r]^{f} & N \\
        	K \ar [u]^{u} \ar [r]_{g} &
        	L \ar[u]_{v}
      	}
    	$$
    	where $K$ and $L$ are $\lambda$-presentable, there is a morphism $t:L\to M$ with $tg=u$.
\end{defi}

\begin{rem}
	In general, a $\lambda$-pure morphism need not be a monomorphism.  By \cite[2.29]{adamek-rosicky}, though, $\lambda$-pure morphisms are monomorphisms in any $\lambda$-accessible category.
\end{rem}

\begin{defi}
	Given a functor $F:\ck\to\cl$, the {\em $\lambda$-pure powerful image of $\ck$}, denoted $P_\lambda(F)$, is the least full subcategory of $\cl$ containing $FA$ for all $A\in\ck$ and closed under $\lambda$-pure $\cl$-subobjects.
\end{defi}

The impulse that led to the development of accessible categories---namely, the dream of an abstract characterization of classes of models---manifested itself on the model-theoretic side as well, in the form of Shelah's AECs (cf. \cite{shelahaecs}) and, more recently, the $\mu$-AECs of \cite{mu-aec-jpaa}.  We will not define these classes in detail---for most purposes, it suffices to think of them simply as category-theoretic generalizations of elementary classes of structures with elementary embeddings---but we note the following, directing curious readers to \cite{baldwinbk} and \cite{mu-aec-jpaa} for more information:

\begin{fact}\label{aecmuaeccats}{\em 
	\begin{enumerate}
		\item Let $\ck$ be an AEC with L\"owenheim-Skolem-Tarski number $\lsnk=\lambda$. If we consider $\ck$ as a category with $\ck$-embeddings as morphisms, it is $\lambda^+$-accessible, has arbitrary {\em directed} colimits, and all morphisms are monomorphisms (\cite[4.8]{catpap},\cite[5.5]{rosbek}). Take $U:\ck\to\Set$ to be the underlying set functor.  Then, moreover, the directed colimits and monomorphisms of $\ck$ are concrete; that is, they are preserved by $U$.  Moreover, a category satisfying the above properties is nearly an AEC (see \cite{rosbek}).
		
		We note, too, that in this case an object is $\mu$-presentable, $\mu>\lambda$, just in case $|UM|<\mu$.  This means that, by an easy counting argument, 
		$$|\pres{\mu}{\ck}|\leq 2^{<\mu},$$ 
		hence it is certainly smaller than the next strong limit cardinal.  We will need this fact in the proof of Theorem~\ref{almeasequiv} below.
		\item Let $\ck$ be a $\mu$-AEC with L\"owenheim-Skolem-Tarski number $\lsnk=\theta$. Then, considered as a concrete category $(\ck,U)$ as above, $\ck$ is $\theta^+$-accessible, with all {\em $\mu$-directed} colimits and all morphisms monomorphisms.  In this case, $\mu$-directed colimits and monomorphisms in $\ck$ are concrete (\cite[4.3]{mu-aec-jpaa}).
		
		In this case, we can specify precisely what categories are $\mu$-AECs for some $\mu$: if $\ck$ is a $\lambda$-accessible category with all morphisms monomorphisms, then, by \cite[4.10]{mu-aec-jpaa}, it is equivalent to a $\lambda$-AEC with $\lsnk=(\lambda+|\pres{\lambda}{\ck}|)^{<\lambda}$.  So, in fact, we may shift freely between the framework of $\mu$-AECs and that of accessible categories with all morphisms monomorphisms.
		
		The connection between presentability and cardinality is more delicate in $\mu$-AECs, but \cite[4.12]{internal-sizes-v2} ensures that if an object $M$ in a $\mu$-AEC is $\lambda$-presentable, $\lambda$ larger than the L\"owenheim-Skolem-Tarski number (hence also larger than $\mu$), then the cardinality of the underlying set is no larger than ${(\lambda^-)}^{<\mu}$, where
		$$\lambda^{-}=\left\{\begin{array}{ccc}\theta & \hspace{2 mm} & \mbox{if }\lambda=\theta^+\vspace{1 mm}\\ \lambda & & \mbox{if }\lambda\mbox{ limit}\end{array}\right..$$
		Counting possible $\mu$-ary structures on sets of cardinality $(\lambda^-)^{<\mu}$, one can see, at the very least, that $|\pres{\lambda}{\ck}|$ will certainly be less than the first inaccessible cardinal larger than $\lambda$.
	\end{enumerate}	}
\end{fact}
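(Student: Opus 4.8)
The statement is an omnibus \emph{Fact}, so the plan is not to prove anything new but to assemble the cited results and supply the short glue arguments. I would organize the verification clause by clause, treating each biconditional in both directions, and being careful to flag what is genuinely quoted from the literature versus what is routine counting.

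For item (1): First I would note that an AEC $\ck$ with $\LS(\ck)=\lambda$, regarded as the concrete category $(\ck,U)$ with $U\colon\ck\to\Set$ the underlying-set functor, satisfies exactly the hypotheses under which \cite[4.8]{catpap} and \cite[5.5]{rosbek} produce $\lambda^+$-accessibility: $\ck$-embeddings are monomorphisms (being injective and closed under composition and left cancellation), arbitrary directed colimits exist and are computed as unions along the chain axioms, and the L\"owenheim--Skolem--Tarski axiom exhibits each $M\in\ck$ as the $\lambda^+$-directed colimit of its $\ck$-substructures of size $\leq\lambda$; the same descriptions make directed colimits and monomorphisms concrete. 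The identification of the $\mu$-presentable objects for $\mu>\lambda$ with the $M$ satisfying $|UM|<\mu$ then follows: LST writes a model of size $<\mu$ as a $\mu$-small colimit of $\lambda$-sized submodels, so it is $\mu$-presentable, and conversely a $\mu$-presentable object cannot have $|UM|\geq\mu$, since it would fail to factor through the $\mu$-directed colimit of its own $<\mu$-sized substructures. The bound $|\pres{\mu}{\ck}|\leq 2^{<\mu}$ is then pure counting: in a signature of size $\leq\lambda<\mu$ there are at most $2^{<\mu}$ structures on a fixed underlying set of cardinality $<\mu$, and at most $2^{<\mu}$ candidate substructure relations on each, so there are at most $2^{<\mu}$ isomorphism types of models of size $<\mu$---in particular fewer than the next strong limit cardinal.

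For item (2): Here I would quote \cite[4.3]{mu-aec-jpaa} for the assertion that a $\mu$-AEC $\ck$ with $\LS(\ck)=\theta$, taken concretely, is $\theta^+$-accessible, has all $\mu$-directed colimits, has all morphisms monomorphisms, and that $\mu$-directed colimits and monomorphisms are concrete; the underlying argument parallels the AEC case with $\mu$-directedness in place of plain directedness throughout. For the converse direction---that a $\lambda$-accessible category with all morphisms monomorphisms is equivalent to a $\lambda$-AEC with $\LS$ number $(\lambda+|\pres{\lambda}{\ck}|)^{<\lambda}$---I would invoke \cite[4.10]{mu-aec-jpaa}, and the two directions together license the free passage between accessible categories with all maps mono and $\mu$-AECs. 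The final clause, that a $\lambda$-presentable $M$ with $\lambda$ above the LST number has $|UM|\leq(\lambda^-)^{<\mu}$, is \cite[4.12]{internal-sizes-v2}; counting $\mu$-ary structures on a set of that size bounds $|\pres{\lambda}{\ck}|$ below the first inaccessible exceeding $\lambda$.

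The one place that genuinely resists a one-line derivation, and hence the step I would handle purely by citation rather than re-proof, is the failure of presentability rank and underlying cardinality to coincide in the $\mu$-AEC setting: the bound $|UM|\leq(\lambda^-)^{<\mu}$ is subtler than its AEC analogue $|UM|<\mu$ and is exactly the content of \cite[4.12]{internal-sizes-v2}. Everything else reduces to transcribing the axioms of an (ordinary or $\mu$-)AEC into the vocabulary of (pre)accessible categories together with elementary cardinal arithmetic, so I would keep those parts brief.
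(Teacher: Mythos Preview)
Your proposal is correct and matches the paper's approach: the paper treats this as a \emph{Fact} with no proof, simply citing \cite{catpap}, \cite{rosbek}, \cite{mu-aec-jpaa}, and \cite{internal-sizes-v2} at the relevant points and leaving the counting arguments implicit. Your write-up supplies more detail than the paper itself (in particular the two directions of the presentability-versus-cardinality equivalence and the explicit $2^{<\mu}$ count), but this is entirely in the spirit of the paper's treatment and introduces no divergence.
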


\section{Accessible images revisited}\label{secaccims}

Our goal in this section is to show that, under the assumption of a sufficiently measurable cardinal $\kappa$, the powerful image of an accessible functor is closed under colimits of $\kappa$-chains.  First, we make explicit the kind of chains we are willing to entertain:

\begin{defi} Let $\kappa$ be an infinite cardinal, $\cl$ an arbitrary category.
\begin{enumerate}\item Given a chain $D:\kappa\to\cl$, we say that a subchain $D':S\to\cl$ is \emph{cofinal in $D$} if $D$ and $D'$ have the same colimit in $\cl$(provided, of course, that the colimit exists).
	\item We say that a chain $D:\kappa\to\cl$ is {\em nondegenerate} if any cofinal subchain $D':S\to\cl$ has $|S|\geq\cof(\kappa)$.  That is, we allow only such degeneracies as arise from the set-theoretic properties of $\kappa$ itself.		
\end{enumerate}
\end{defi}

Henceforth, we assume that all our chains are nondegenerate.

In what follows, we will make heavy use of a particular notion of complexity associated with an accessible category $\cl$, namely the parameter $\mu_\cl$ of \cite[3.1]{BT-R}.  For the sake of completeness, we include the definition here:

\begin{defi}\label{sharpdef}
	Let $\cl$ be a $\lambda$-accessible category.  Let $\gamma_{\cl}$ denote the smallest cardinal with $\gamma_{\cl}\sgeq\lambda$ and $\gamma_{\cl}\geq|\pres{\lambda}{\cl}|$: as noted in \cite{BT-R}, $\gamma_\cl\leq (2^{|\pres{\lambda}{\cl}|})^+$.  Define 
	$$\mu_\cl=(\gamma_{\cl}^{<\gamma_\cl})^+.$$  
	By design, $\lambda\slesseq\gamma_\cl\slesseq\mu_\cl$.
\end{defi}

The following theorem is almost automatic, after a careful reading of the proof of \cite[3.2]{BT-R}.  We will give a general outline of the details we need from their proof, and make careful note of the places where our weaker assumption---$\mu_\cl$-measurability, rather than $\mu_\cl$-strong compactness---forces modifications to the argument.

\begin{thm}\label{thmmeaspure}
Let $\lambda$ be a regular cardinal and $\cl$ a $\lambda$-accessible category such that there exists a regular $\mu_\cl$-measurable cardinal $\kappa$, $\kappa\sgeq\mu_\cl$.  Then the $\lambda$-pure powerful image of any $\lambda$-accessible functor to $\cl$ preserving $\mu_\cl$-presentable objects is $\kappa$-preaccessible and closed under colimits of (nondegenerate) $\kappa$-chains in $\cl$.
\end{thm}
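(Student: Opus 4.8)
The plan is to rerun the proof of \cite[3.2]{BT-R} almost verbatim, paying attention to the points at which it invokes $\mu_\cl$-strong compactness and checking that, for index sets of the restricted shape $\kappa$, a uniform $\mu_\cl$-complete ultrafilter does the same work. First record the arithmetic needed throughout: by design (Definition~\ref{sharpdef}) $\lambda\slesseq\gamma_\cl\slesseq\mu_\cl$, so since $\kappa\sgeq\mu_\cl$ both $\ck$ and $\cl$ are $\kappa$-accessible, $F$ preserves $\kappa$-directed colimits and $\kappa$-presentable objects, and $|\pres{\gamma_\cl}{\cl}|\leq\gamma_\cl^{<\gamma_\cl}<\mu_\cl$---this last being the precise sense in which $\mu_\cl$ bounds the complexity of $\cl$. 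It suffices to establish two assertions about the full subcategory $P_\lambda(F)$ of $\cl$: that it is $\kappa$-preaccessible, and that it is closed under colimits of nondegenerate $\kappa$-chains computed in $\cl$. The first is routine given that $\cl$ is $\kappa$-accessible, exactly as in \cite{BT-R}: the $\kappa$-presentable objects of $\cl$ lying in $P_\lambda(F)$ form a set, and if $L\in P_\lambda(F)$ has a $\lambda$-pure monomorphism $m\colon L\to FA$, then writing $A=\colim_t A_t$ as a $\kappa$-directed colimit of $\kappa$-presentable objects of $\ck$ and applying $F$ yields $FA=\colim_t FA_t$ with each $FA_t$ $\kappa$-presentable; pulling $L$ back along this cocone exhibits $L$ as the $\kappa$-directed colimit of the subobjects $L\times_{FA}FA_t$, each of which is $\kappa$-presentable and, pullbacks of $\lambda$-pure monomorphisms being $\lambda$-pure, is $\lambda$-pure in $FA_t$, hence lies in $P_\lambda(F)$. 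So we concentrate on the second assertion.

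Let $D\colon\kappa\to\cl$ be a nondegenerate $\kappa$-chain with $D_i\in P_\lambda(F)$ for every $i$, and set $L=\colim D$; for each $i$ fix a $\lambda$-pure monomorphism $m_i\colon D_i\to FA_i$. Using that $\kappa$ is $\mu_\cl$-measurable, fix a uniform $\mu_\cl$-complete ultrafilter $\cu$ on $\kappa$ (Definition~\ref{defmeasurable}(1)). Following the theory of reduced products in accessible categories of \cite{makkai-pare}, as refined in \cite{BT-R}, form the $\cu$-reduced product $P=\prod_\cu FA_i$ in $\cl$: this exists because $\cu$ is $\mu_\cl$-complete and $\mu_\cl$ dominates the complexity of $\cl$ in the sense above, which is exactly what makes the relevant \L o\'s-type argument go through; and, using that $F$ preserves $\mu_\cl$-presentable objects and $\mu_\cl$-directed colimits, one identifies $P$, up to isomorphism, with $F$ applied to the corresponding $\cu$-reduced product of the $A_i$, so that $P$ lies in the image of $F$. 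Now, since $\cu$ is uniform, every upper set $[i,\kappa)$ belongs to $\cu$ by Lemma~\ref{uppersets}; since $D$ is nondegenerate and $\kappa>\lambda$, any morphism from a $\lambda$-presentable object into $L=\colim D$ factors through some stage $D_i$, and hence, via the chain maps, through $D_j$ for all $j$ in the corresponding upper set. Using this one constructs the canonical ``diagonal'' morphism $\ell\colon L\to P$ and verifies, from the $\lambda$-purity of the $m_i$ together with a \L o\'s argument across $\cu$, that $\ell$ is itself a $\lambda$-pure monomorphism. Thus $L$ is $\lambda$-pure in $P$, which lies in the image of $F$, so $L\in P_\lambda(F)$, as required.

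The step I expect to be the main obstacle is the identification of $P=\prod_\cu FA_i$ with $F$ of the corresponding reduced product in $\ck$---equivalently, that $F$ commutes up to isomorphism with the $\cu$-reduced product in play. This is precisely the point that the careful reading of \cite[3.2]{BT-R} is designed to handle, and it is here that the particular cardinal $\mu_\cl$ is forced: the reduced product is built from the $A_i$ by $\mu_\cl$-small products followed by a $\mu_\cl$-directed colimit along $\cu$, and one must present the $A_i$ through their $\mu_\cl$-presentable approximations so that preservation of $\mu_\cl$-presentable objects and of $\mu_\cl$-directed colimits by $F$ is enough to carry the isomorphism through; one must also keep track of the analogous complexity parameter for $\ck$, which $F$-accessibility lets one control. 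A subsidiary difficulty is verifying that the diagonal map $\ell\colon L\to P$ is $\lambda$-pure and not merely monic, and it is exactly here that nondegeneracy of $D$---which rules out cofinal subchains of length less than $\kappa$---is needed, since it prevents $\cu$ from being concentrated on a proper initial segment of $\kappa$. Finally, the reason the conclusion is weaker than the $\mu_\cl$-strongly compact case of \cite{BT-R}, in that we obtain closure only under $\kappa$-chains and not under all $\kappa$-directed colimits, is structural: a single $\mu_\cl$-complete ultrafilter lives on the set $\kappa$, so the argument above applies as stated only to diagrams indexed by $\kappa$; to treat an arbitrary $\kappa$-directed poset $T$ one would first need to extend the ($\kappa$-complete, hence $\mu_\cl$-complete) fineness filter on $T$ to a $\mu_\cl$-complete ultrafilter on $T$, and having such extensions for every $T$ is precisely the assertion that $\kappa$ is $\mu_\cl$-strongly compact.
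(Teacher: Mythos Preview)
Your overall strategy---embed $L=\colim D_i$ $\lambda$-purely into an ultraproduct that lies in the image of $F$---is the right one, and is what the paper does, but you diverge from the paper (and from \cite{BT-R}) at exactly the point that makes the argument work.  The paper does not attempt to form $\prod_\cu FA_i$ in the abstract accessible category $\cl$.  Instead it follows Steps~1--2 of \cite{BT-R}: first replace $P_\lambda(F)$ by the full image of a forgetful functor $H:\cp\to\cl$ (where $\cp$ is the category of $\lambda$-pure arrows $L\to FK$), then fully embed $\cp$ and $\cl$ into categories of many-sorted structures $\Str(\Sigma_\cp)$, $\Str(\Sigma_\cl)$, identify $\cp$ with $\Mod(T)$ for an $L_{\mu_\cl,\mu_\cl}$-theory $T$, and reduce to showing that $\Red{\Sigma}{\Sigma_\cl}{T}$ is closed under $\kappa$-chains.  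In that concrete setting, ultraproducts exist for free, \L o\'s for $L_{\lambda,\lambda}$ is available because $\cu$ is $\mu_\cl$-complete, and the $\lambda$-pure embedding of the colimit $M$ into $\prod_\cu M_i$ is obtained by verifying that the ultraproduct satisfies the $\lambda$-pure diagram $T_M$ of $M$ (positive-primitive and negated positive-primitive $L_{\lambda,\lambda}$-formulas).  The chain structure and uniformity are used exactly where you expect: each such formula mentions fewer than $\lambda<\mu_\cl$ constants, these are coherently interpreted from some stage $i'$ onward, and $[i',\kappa)\in\cu$ by Lemma~\ref{uppersets}.

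Your attempt to bypass Step~2 and work with ``$\cu$-reduced products in $\cl$'' has a real gap.  You describe the reduced product as ``$\mu_\cl$-small products followed by a $\mu_\cl$-directed colimit along $\cu$,'' but neither $\ck$ nor $\cl$ is assumed to have products (or pullbacks), so this construction is simply unavailable in the abstract categories; the passage to $\Str(\Sigma)$ is precisely what supplies them.  The same issue undermines your preaccessibility argument: forming $L\times_{FA}FA_t$ presumes pullbacks in $\cl$, and even where they exist, stability of $\lambda$-pure monomorphisms under pullback is not automatic in a merely accessible category.  The paper instead gets $\kappa$-preaccessibility from the $\kappa$-accessibility of the reduct functor $R:\Mod(T)\to\Str(\Sigma_\cl)$, again a dividend of the concretization.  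Finally, note that the paper takes the ultraproduct of the \emph{chain objects} $M_i$ themselves (suitably expanded), not of unrelated witnesses $FA_i$; this makes the diagonal map and the \L o\'s verification straightforward, whereas your $m_i:D_i\to FA_i$ carry no coherence with the chain maps and you would need to argue carefully that the resulting $\ell$ is well-defined and $\lambda$-pure.
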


\begin{proof}
	Let $F:\ck\to\cl$ be a $\lambda$-accessible functor.  Paralleling the argument of \cite{BT-R}, we proceed in three steps:
	
	{\bf Step 1:} We can realize the $\lambda$-pure powerful image of $F$ as the full image of a functor
	$$H:\cp\to\cl$$
	where $\cp$ is the category of $\lambda$-pure morphisms $L\to FK$ in $\cl$, $K$ in $\ck$, and $H$ is the forgetful functor taking a morphism $L\to FK$ to its domain $L$.  By work of \cite{BT-R}, $\cp$ is $\mu_\cl$-accessible, and has $\lambda$-directed colimits; $H$ preserves $\lambda$-directed colimits.
	
	{\bf Step 2:} We can reformulate the situation so that we are concerned with a functor between categories of concrete structures, opening up the possibility of using ultrafilters.  
	
	We again leave the details to \cite{BT-R}.  In short, the categories $\cl$ and $\cp$ can be fully embedded in categories of many-sorted finitary structures $\Str(\Sigma_\cl)$ and $\Str(\Sigma_\cp)$, respectively, and $\cp$ can be identified with a subcategory $\Mod(T)$ of $\Str(\Sigma_\cp)$ consisting of models of a $L_{\mu_\cl,\mu_\cl}(\Sigma_\cp)$-theory $T$.  This can also be considered as a theory in signature $\Sigma=\Sigma_\cl\coprod\Sigma_\cp$, with $E:\Mod(T)\to\Str(\Sigma)$ the corresponding embedding.  Consider the reduct functor $R:\Mod(T)\to\Str(\Sigma_\cl)$ obtained by composing $E$ with the obvious reduct $\Str(\Sigma)\to\Str(\Sigma_\cl)$.  The full image of $R$, which is precisely $\Red{\Sigma}{\Sigma_\cl}{T}$, the subcategory of reducts of models of $T$ to $\Sigma_\cl$, is equivalent to the full image of $H$.  Moreover, $R$ preserves $\mu_\cl$-directed colimits and $\mu_\cl$-presentable objects.  So we can reduce all questions about the full image of $H$ to ones about $\Red{\Sigma}{\Sigma_\cl}{T}$.
	
	By assumption, $\kappa\sgeq\mu_\cl\sgeq\lambda$, so $R$ is $\kappa$-accessible and preserves $\kappa$-presentable objects: in particular, any object in $\Red{\Sigma}{\Sigma_\cl}{T}$ is a $\kappa$-directed colimit of $\kappa$-presentable $\Str(\Sigma)$-objects in $\Red{\Sigma}{\Sigma_\cl}{T}$.  That is, $\Red{\Sigma}{\Sigma_\cl}{T}$ is $\kappa$-preaccessible. We wish to show that $\Red{\Sigma}{\Sigma_\cl}{T}$ is closed in $\Str(\Sigma)$ under colimits of $\kappa$-chains.
	
	{\bf Step 3:} We proceed via an ultrafilter argument resembling that of \cite{BT-R}.  Let $D:\kappa\to\Red{\Sigma}{\Sigma_\cl}{T}$ be a $\kappa$-chain, and let $M$ be its colimit in $\Str(\Sigma)$.  To show that $M$ is in $\Red{\Sigma}{\Sigma_\cl}{T}$, it suffices to show that there is a $\lambda$-pure embedding of $M$ into an object of $\Red{\Sigma}{\Sigma_\cl}{T}$.  We capture the existence of such an embedding, as in \cite{BT-R}, by taking the $\lambda$-pure diagram $T_M$ of $M$; that is, $T_M$ consists of the positive-primitive and negated positive-primitive formulas in $L_{\lambda,\lambda}(\Sigma_M)$ satisfied by $M$.  Here $\Sigma_M$ is simply $\Sigma$ with the addition of a new constant symbol $c_m$ for each element $m\in M$.  An object $N$ of $\Str(\Sigma)$ admits a $\lambda$-pure embedding $M\to N$ just in case it satisfies $T_M$.  So it suffices to exhibit a model of $T\cup T_M$.
	
	Since we are assuming that $\kappa$ is $\mu_\cl$-measurable, we know that there is a uniform, $\mu_\cl$-complete ultrafilter $\cu$ on $\kappa$.  We claim that the ultraproduct $\prod_{\cu}M_i$ satisfies $T\cup T_M$.  As $\cu$ is $\mu_\cl$-complete, and hence $\lambda$-complete, Los's Theorem for $L_{\lambda,\lambda}$ will apply.
	
	From this point on, the required checks are precisely as in \cite{BT-R}---we include only the basic setup.  Naturally, all the $M_i$ are already models of $T$.  Expand each $M_i$ to a $\Sigma_M$-structure as follows:
	\begin{itemize}
	\item If $m$ is in the image of the colimit coprojection $M_i\to M$, interpret $c_m$ as its preimage---we call this a {\it coherent} assigment.
	\item Otherwise, assign $c_m$ an arbitary value in $M_i$.
	\end{itemize}
	By $\mu_\cl$-directedness of the chain (which follows from nondegeneracy and Lemma~\ref{cofmumeas}), and the fact that $C$ is the set-theoretic direct limit of the $M_i$, for any $\phi$ in $L_{\lambda,\lambda}(\Sigma_M)$ there is $i<\kappa$ such that all constant symbols appearing in $\phi$ are interpreted coherently in $M_i$.  Moreover, $\phi$ holds in $M$ just in case it holds in some $M_{i'}$, $i'\geq i$.  By the argument of \cite{BT-R}, a positive-primitive (or negated positive-primitive) formula $\phi$ holds in $M$ just in case it holds for all $M_j$, $j\in [i',\kappa)$.  By Lemma~\ref{uppersets}, all such upper sets belong to $\cu$, and we are done.
\end{proof}

\begin{rem}\label{pfremarks}{\em 
	\begin{enumerate}
		\item In \cite{BT-R}, the authors invoke $\kappa$-directedness to assure that any formula $\phi$ will eventually have a coherent assignment of its variables.  Here we use the weaker, but sufficient, condition of $\mu_\cl$-directedness; although we will not dwell on the possibilility, this leaves open the prospect of working with chains of singular length.
		\item Notice that the condition that $\kappa\sgeq\mu_\cl$ is used only in the proof of $\kappa$-preaccessibility of the $\lambda$-pure powerful image.  As $\mu$-strong compactness passes upward, \cite{BT-R} can simply take $\kappa\sgeq\mu_\cl$ without loss of generality.  The weakenings of measurability under consideration here do not have this property, so we need to add it as an explicit assumption if we wish to have preaccessibility.
	\end{enumerate} }
\end{rem}

In connection with Remark~\ref{pfremarks}(2), we recall Proposition~\ref{almeassharp}---if we assume $\kappa$ is {\em almost measurable}, the sharp inequality in the theorem holds automatically.  That is, we have:

\begin{cor}\label{almeaspure}
	Let $\lambda$ be a regular cardinal and $\cl$ a $\lambda$-accessible category such that there exists an almost measurable cardinal $\kappa>\mu_\cl$.  Then the $\lambda$-pure powerful image of any $\lambda$-accessible functor to $\cl$ preserving $\mu_\cl$-presentable objects is $\kappa$-preaccessible and closed under colimits of $\kappa$-chains in $\cl$.
\end{cor}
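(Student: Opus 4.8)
The plan is to derive the statement directly from Theorem~\ref{thmmeaspure}; the only work is to check that an almost measurable cardinal $\kappa>\mu_\cl$ satisfies the three hypotheses imposed there, namely that $\kappa$ be regular, that $\kappa$ be $\mu_\cl$-measurable, and that $\kappa\sgeq\mu_\cl$.

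Regularity of $\kappa$ is immediate from Corollary~\ref{almostmeasreg}. For $\mu_\cl$-measurability, recall that almost measurability means $\mu$-measurability for every $\mu<\kappa$; since $\mu_\cl<\kappa$ by hypothesis, $\kappa$ is in particular $\mu_\cl$-measurable. The one point that takes a moment is the sharp inequality $\kappa\sgeq\mu_\cl$: here I would invoke Proposition~\ref{almeassharp}. The parameter $\mu_\cl=(\gamma_\cl^{<\gamma_\cl})^+$ of Definition~\ref{sharpdef} is a successor cardinal, hence regular, and it lies strictly below $\kappa$, so the proposition yields $\mu_\cl\sless\kappa$, hence in particular $\kappa\sgeq\mu_\cl$. (One in fact obtains the strict relation $\kappa\sgreat\mu_\cl$, but $\sgeq$ is all that Theorem~\ref{thmmeaspure} requires.)

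With all three hypotheses verified, Theorem~\ref{thmmeaspure} applies to any $\lambda$-accessible functor $F:\ck\to\cl$ preserving $\mu_\cl$-presentable objects, yielding that the $\lambda$-pure powerful image of $F$ is $\kappa$-preaccessible and closed under colimits of (nondegenerate) $\kappa$-chains in $\cl$, exactly as asserted. I expect no real obstacle here: the substance lies entirely in Theorem~\ref{thmmeaspure} and Proposition~\ref{almeassharp}, and the only subtlety is the bookkeeping point noted in Remark~\ref{pfremarks}(2)---that for general weakenings of measurability the sharp inequality $\kappa\sgeq\mu_\cl$ must be supplied as an extra hypothesis, whereas almost measurability yields it for free.
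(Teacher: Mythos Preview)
Your proposal is correct and matches the paper's approach exactly: the paper derives the corollary immediately from Theorem~\ref{thmmeaspure} by observing, via Proposition~\ref{almeassharp}, that almost measurability of $\kappa>\mu_\cl$ automatically supplies the sharp inequality $\kappa\sgeq\mu_\cl$ (with regularity coming from Corollary~\ref{almostmeasreg} and $\mu_\cl$-measurability from the definition of almost measurable). You have spelled out the same three checks the paper leaves implicit in the sentence preceding the corollary.
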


Moving in another direction, we could remain with the weaker assumption of $\mu$-measurability but drop the sharp inequality, at the cost of preaccessibility.

\begin{propo}\label{measpurechain}
	Let $\lambda$ be a regular cardinal and $\cl$ a $\lambda$-accessible category such that there exists a regular $\mu_\cl$-measurable cardinal $\kappa$.  Then the $\lambda$-pure powerful image of any $\lambda$-accessible functor to $\cl$ preserving $\mu_\cl$-presentable objects closed under colimits of $\kappa$-chains in $\cl$.
\end{propo}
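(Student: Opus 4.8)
The plan is to reuse the proof of Theorem~\ref{thmmeaspure} almost verbatim, simply excising the part that establishes $\kappa$-preaccessibility of the $\lambda$-pure powerful image. As recorded in Remark~\ref{pfremarks}(2), the hypothesis $\kappa\sgeq\mu_\cl$ there is invoked \emph{only} to secure preaccessibility; the closure of the $\lambda$-pure powerful image under colimits of $\kappa$-chains makes no use of it. So the task amounts to isolating and recording the surviving portion of that argument.

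First I would run Steps 1 and 2 of the proof of Theorem~\ref{thmmeaspure} unchanged. Starting from a $\lambda$-accessible $F\colon\ck\to\cl$ preserving $\mu_\cl$-presentable objects, I realize its $\lambda$-pure powerful image as the full image of the forgetful functor $H\colon\cp\to\cl$ from the category $\cp$ of $\lambda$-pure morphisms $L\to FK$. Following \cite{BT-R}, I then embed $\cl$ and $\cp$ into categories of finitary many-sorted structures, identify $\cp$ with $\Mod(T)$ for an $L_{\mu_\cl,\mu_\cl}(\Sigma_\cp)$-theory $T$, and pass to the reduct functor $R\colon\Mod(T)\to\Str(\Sigma_\cl)$ over $\Sigma=\Sigma_\cl\coprod\Sigma_\cp$, whose full image $\Red{\Sigma}{\Sigma_\cl}{T}$ is equivalent to that of $H$; the functor $R$ preserves $\mu_\cl$-directed colimits and $\mu_\cl$-presentable objects. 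These steps use only the $\mu_\cl$-accessibility machinery of \cite{BT-R}; no sharp inequality above $\mu_\cl$ enters. Here I would stop short of the preaccessibility claim: I would not assert that $R$ is $\kappa$-accessible or preserves $\kappa$-presentable objects, nor that $\Red{\Sigma}{\Sigma_\cl}{T}$ is $\kappa$-preaccessible, since none of this is needed.

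Then I would carry out Step 3 as before. Given a nondegenerate $\kappa$-chain $D\colon\kappa\to\Red{\Sigma}{\Sigma_\cl}{T}$ with members $M_i$ and colimit $M$ in $\Str(\Sigma)$, it suffices to produce a model of $T\cup T_M$, where $T_M$ is the $\lambda$-pure diagram of $M$ in $L_{\lambda,\lambda}(\Sigma_M)$, since any such model lies in $\Red{\Sigma}{\Sigma_\cl}{T}$ and admits a $\lambda$-pure embedding of $M$. I take the ultraproduct $\prod_\cu M_i$ along a uniform, $\mu_\cl$-complete ultrafilter $\cu$ on $\kappa$ --- available because $\kappa$ is $\mu_\cl$-measurable --- expanding each $M_i$ to $\Sigma_M$ coherently wherever possible. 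The one point requiring care, and precisely the place where \cite{BT-R} deploys $\kappa$-directedness, is that coherent assignments eventually account for any given $\phi\in L_{\lambda,\lambda}(\Sigma_M)$; here $\mu_\cl$-directedness of $D$ suffices, and that follows from nondegeneracy of the chain together with Lemma~\ref{cofmumeas}, which yields $\cof(\kappa)\geq\mu_\cl$ from $\mu_\cl$-measurability. Combined with Lemma~\ref{uppersets} (the relevant upper sets lie in $\cu$) and {\L}o\'s's theorem for $L_{\lambda,\lambda}$ (applicable since $\cu$ is $\mu_\cl$-complete, hence $\lambda$-complete), this gives $\prod_\cu M_i\models T\cup T_M$, whence $M\in\Red{\Sigma}{\Sigma_\cl}{T}$.

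The main obstacle is not a mathematical difficulty but a matter of bookkeeping: one must verify that nowhere in Steps 1--3 --- in particular not in the identification of the $\Str(\Sigma)$-colimit $M$ with the set-theoretic direct limit of the $M_i$, nor in the passage through the reduct functor --- is the sharp inequality $\kappa\sgeq\mu_\cl$ used, even implicitly. Once this is confirmed, the statement follows; it is strictly weaker than Theorem~\ref{thmmeaspure} in exactly dropping both preaccessibility of the $\lambda$-pure powerful image and the hypothesis $\kappa\sgeq\mu_\cl$.
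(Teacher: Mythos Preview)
Your proposal is correct and matches the paper's approach exactly. The paper does not give a separate proof of Proposition~\ref{measpurechain}; it simply records, in the sentence preceding the statement and in Remark~\ref{pfremarks}(2), that one may drop the hypothesis $\kappa\sgeq\mu_\cl$ from Theorem~\ref{thmmeaspure} at the cost of $\kappa$-preaccessibility, and your write-up is a faithful unpacking of precisely that observation.
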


As noted in \cite[3.4]{BT-R}, we can run the argument for Theorem~\ref{thmmeaspure} with monomorphisms in place of $\lambda$-pure monomorphisms, taking $T_M$ in Step 3 to be the atomic (and negated atomic) formulas true in $M$.  This gives

\begin{thm}\label{thmmeasmono}
	Let $\lambda$ be a regular cardinal and $\cl$ a $\lambda$-accessible category such that there exists a regular $\mu_\cl$-measurable cardinal $\kappa$, $\kappa\sgeq\mu_\cl$.  Then the powerful image of any $\lambda$-accessible functor to $\cl$ preserving $\mu_\cl$-presentable objects is $\kappa$-preaccessible and closed under colimits of $\kappa$-chains in $\cl$.
\end{thm}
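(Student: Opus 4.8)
The plan is to run the three-step argument in the proof of Theorem~\ref{thmmeaspure} essentially verbatim, systematically replacing $\lambda$-pure $\cl$-morphisms by $\cl$-monomorphisms, and replacing the $\lambda$-pure diagram by the \emph{atomic diagram} in Step~3. Concretely: in Step~1, realize the powerful image $P(F)$ as the full image of the forgetful functor $H\colon\cp\to\cl$, where now $\cp$ is the category whose objects are the monomorphisms $L\to FK$ in $\cl$ with $K\in\ck$ and $H$ sends such a morphism to its domain; as in \cite{BT-R}, $\cp$ is $\mu_\cl$-accessible, has $\lambda$-directed colimits, and $H$ preserves them. In Step~2, fully embed $\cl$ and $\cp$ into categories of many-sorted finitary structures, present $\cp$ as $\Mod(T)$ for an $L_{\mu_\cl,\mu_\cl}$-theory $T$, and pass to the reduct functor $R\colon\Mod(T)\to\Str(\Sigma_\cl)$, whose full image $\Red{\Sigma}{\Sigma_\cl}{T}$ is equivalent to that of $H$, hence to $P(F)$. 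Since $F$ preserves $\mu_\cl$-presentable objects, $R$ preserves $\mu_\cl$-directed colimits and $\mu_\cl$-presentable objects, and as $\kappa\sgeq\mu_\cl$ it is moreover $\kappa$-accessible and preserves $\kappa$-presentable objects; this yields $\kappa$-preaccessibility of $\Red{\Sigma}{\Sigma_\cl}{T}$, hence of $P(F)$, exactly as in the pure case.

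For closure under colimits of (nondegenerate) $\kappa$-chains, take $D\colon\kappa\to\Red{\Sigma}{\Sigma_\cl}{T}$ with colimit $M$ in $\Str(\Sigma)$, and mirror Step~3, now with $T_M$ the atomic diagram of $M$ --- the atomic and negated atomic $L_{\omega,\omega}(\Sigma_M)$-sentences true in $M$, where $\Sigma_M$ is $\Sigma$ with a fresh constant $c_m$ for each $m\in M$. Using $\mu_\cl$-measurability of $\kappa$, fix a uniform $\mu_\cl$-complete ultrafilter $\cu$ on $\kappa$ (in particular $\cu$ is $\lambda$-complete, and a fortiori countably complete, which is all that is needed now); expand each $M_i$ to a $\Sigma_M$-structure by the coherent assignment of constants where possible and arbitrarily otherwise; by $\mu_\cl$-directedness of the chain (which holds by nondegeneracy together with Lemma~\ref{cofmumeas}), every sentence occurring in $T_M$ has all of its constants interpreted coherently on a final segment $[i',\kappa)$, and an atomic or negated atomic sentence true in $M$ is true in every $M_j$ with $j\in[i',\kappa)$. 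By Lemma~\ref{uppersets} that set lies in $\cu$, so Los's theorem gives $\prod_\cu M_i\models T\cup T_M$; as in \cite{BT-R}, exhibiting such a model suffices to place $M$ in $\Red{\Sigma}{\Sigma_\cl}{T}$, completing the proof.

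I do not anticipate a real obstacle: the substantive work is already done in \cite{BT-R} and in the proof of Theorem~\ref{thmmeaspure}. The one point meriting care is the step, invoked in Step~3, that an object $N$ of $\Str(\Sigma)$ satisfies $T_M$ precisely when it admits a $\cl$-monomorphism from $M$ --- i.e.\ that under the embedding into $\Str(\Sigma)$ the relevant $\cl$-monomorphisms out of $M$ are exactly the structure-embeddings, so that preservation of negated atomic formulas is legitimate; this is exactly what \cite[3.4]{BT-R} records. The only genuine simplification relative to Theorem~\ref{thmmeaspure} is that $T_M$ is now finitary, so Los's theorem is needed only for $L_{\omega,\omega}$, a demand comfortably met by $\mu_\cl$-completeness of $\cu$.
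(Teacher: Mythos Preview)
Your proposal is correct and follows exactly the paper's own argument: the paper proves this theorem in a single sentence, remarking that one runs the proof of Theorem~\ref{thmmeaspure} with monomorphisms in place of $\lambda$-pure morphisms and with $T_M$ taken to be the atomic (and negated atomic) diagram of $M$, citing \cite[3.4]{BT-R}. You have simply spelled out that sketch in detail.
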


As above, this proof generates a pair of additional consequences:

\begin{propo}\label{almeasmono}
	Let $\lambda$ be a regular cardinal and $\cl$ a $\lambda$-accessible category such that there exists an almost measurable cardinal $\kappa>\mu_\cl$.  Then the powerful image of any $\lambda$-accessible functor to $\cl$ preserving $\mu_\cl$-presentable objects is $\kappa$-preaccessible and closed under colimits of $\kappa$-chains in $\cl$.
\end{propo}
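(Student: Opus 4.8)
The plan is to derive this as the exact analogue of Corollary~\ref{almeaspure}, with Theorem~\ref{thmmeasmono} playing the role that Theorem~\ref{thmmeaspure} plays there. Concretely, I would show that an almost measurable cardinal $\kappa>\mu_\cl$ automatically satisfies all of the hypotheses imposed on $\kappa$ in Theorem~\ref{thmmeasmono}, and then simply invoke that theorem.

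First I would record that $\kappa$ is regular, which is immediate from Corollary~\ref{almostmeasreg}. Next, since $\kappa$ is almost measurable it is $\mu$-measurable for every $\mu<\kappa$ by Definition~\ref{defmeasurable}(2); as $\mu_\cl<\kappa$ by hypothesis, $\kappa$ is in particular $\mu_\cl$-measurable. It then remains only to verify the sharp inequality $\kappa\sgeq\mu_\cl$. For this I would observe that $\mu_\cl=(\gamma_\cl^{<\gamma_\cl})^+$ is a successor cardinal, hence regular, and that $\mu_\cl<\kappa$; Proposition~\ref{almeassharp} therefore applies and yields $\mu_\cl\sless\kappa$, which in particular gives $\kappa\sgeq\mu_\cl$. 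With regularity, $\mu_\cl$-measurability, and the sharp inequality all in hand, Theorem~\ref{thmmeasmono} applied to the given $\lambda$-accessible functor $F\colon\ck\to\cl$ preserving $\mu_\cl$-presentable objects delivers exactly the conclusion: the powerful image $P(F)$ is $\kappa$-preaccessible and closed under colimits of (nondegenerate) $\kappa$-chains in $\cl$.

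There is essentially no obstacle here beyond bookkeeping, since the entire technical content has already been isolated in Theorem~\ref{thmmeasmono}. The single point at which the full strength of almost measurability—rather than bare $\mu_\cl$-measurability—is actually used is the clause $\kappa\sgeq\mu_\cl$: as explained in Remark~\ref{pfremarks}(2), $\mu$-measurability (unlike $\mu$-strong compactness) does not propagate upward, so one cannot assume the sharp inequality for free, and it must be supplied by Proposition~\ref{almeassharp}. This is precisely the trade-off recorded in Proposition~\ref{measpurechain}, where dropping the sharp inequality costs exactly the preaccessibility part of the conclusion.
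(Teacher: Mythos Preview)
Your proposal is correct and matches the paper's approach exactly: the paper states this proposition with no explicit proof, merely remarking that ``as above, this proof generates a pair of additional consequences,'' meaning that Proposition~\ref{almeasmono} follows from Theorem~\ref{thmmeasmono} just as Corollary~\ref{almeaspure} follows from Theorem~\ref{thmmeaspure}, via Proposition~\ref{almeassharp}. You have in fact spelled out the verification in more detail than the paper does, including the observation that $\mu_\cl$ is a successor cardinal and hence regular, which is needed to invoke Proposition~\ref{almeassharp}.
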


\begin{propo}\label{measmono}
	Let $\lambda$ be a regular cardinal and $\cl$ a $\lambda$-accessible category such that there exists a regular $\mu_\cl$-measurable cardinal $\kappa$.  Then the powerful image of any $\lambda$-accessible functor to $\cl$ preserving $\mu_\cl$-presentable objects is closed under colimits of $\kappa$-chains in $\cl$.
\end{propo}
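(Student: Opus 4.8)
The plan is to recognize that Proposition~\ref{measmono} is precisely what the proof of Theorem~\ref{thmmeasmono} delivers once the clause asserting $\kappa$-preaccessibility is deleted. As recorded in Remark~\ref{pfremarks}(2), the hypothesis $\kappa\sgeq\mu_\cl$ feeds into the proof of Theorem~\ref{thmmeaspure}, and hence into that of Theorem~\ref{thmmeasmono}, at exactly one place: it is what guarantees that the reduct functor $R$ of Step~2 is $\kappa$-accessible and preserves $\kappa$-presentable objects, i.e. that $\Red{\Sigma}{\Sigma_\cl}{T}$ is $\kappa$-preaccessible. Nothing else in the argument---in particular, nothing in the verification that $\Red{\Sigma}{\Sigma_\cl}{T}$ is closed under colimits of $\kappa$-chains---makes any use of the sharp inequality. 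So I would simply replay Steps~1--3 of Theorem~\ref{thmmeaspure} in the monomorphism setting of \cite[3.4]{BT-R}, retaining only the conclusion about chain-closure.

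Concretely: first, following \cite{BT-R}, I realize the powerful image of a $\lambda$-accessible functor $F:\ck\to\cl$ preserving $\mu_\cl$-presentable objects as, up to equivalence, the reduct category $\Red{\Sigma}{\Sigma_\cl}{T}\subseteq\Str(\Sigma)$ of $\Sigma_\cl$-reducts of models of an appropriate $L_{\mu_\cl,\mu_\cl}$-theory $T$ in a suitable signature $\Sigma$, with reduct functor $R$ preserving $\mu_\cl$-directed colimits and $\mu_\cl$-presentable objects; this portion of the construction does not involve $\kappa$ at all. Then, given a nondegenerate $\kappa$-chain $D:\kappa\to\Red{\Sigma}{\Sigma_\cl}{T}$ with colimit $M$ in $\Str(\Sigma)$, I must exhibit a model of $T\cup T_M$, where $T_M$ is now the atomic and negated atomic diagram of $M$ in $L_{\lambda,\lambda}(\Sigma_M)$; any such model witnesses a monomorphism $M\to N$ into $\Red{\Sigma}{\Sigma_\cl}{T}$, hence places $M$ there. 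I take $\cu$ a uniform $\mu_\cl$-complete ultrafilter on $\kappa$ (available since $\kappa$ is $\mu_\cl$-measurable), note that it is in particular $\lambda$-complete so that Los's theorem for $L_{\lambda,\lambda}$ applies to $\prod_\cu M_i$, invoke Lemma~\ref{cofmumeas} for $\mu_\cl$-directedness of $D$, expand each $M_i$ by the coherent assignment of the constants $c_m$ wherever possible and arbitrarily otherwise, and check---verbatim as in \cite{BT-R}---that every formula of $L_{\lambda,\lambda}(\Sigma_M)$ is eventually coherent and that an atomic or negated atomic $\phi$ true in $M$ is true in $M_j$ for all $j$ in some upper set $[i',\kappa)$, which belongs to $\cu$ by Lemma~\ref{uppersets}. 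This yields $\prod_\cu M_i\models T\cup T_M$, hence $M\in\Red{\Sigma}{\Sigma_\cl}{T}$, and transporting back through the equivalences of Steps~1 and~2 puts $M$ in the powerful image of $F$.

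There is no genuine obstacle here beyond bookkeeping---confirming cleanly that the sharp inequality in \cite{BT-R} is used \emph{only} for preaccessibility, which Remark~\ref{pfremarks}(2) has already done. Accordingly I expect the write-up to amount to little more than a pointer: this is the proof of Theorem~\ref{thmmeasmono} with the preaccessibility clause, and with it the hypothesis $\kappa\sgeq\mu_\cl$, suppressed. If one wished to be even more economical, one could instead deduce it directly from Proposition~\ref{measpurechain} by the same $\lambda$-pure-to-monomorphism substitution (atomic diagrams in place of $\lambda$-pure diagrams) that was used to pass from Theorem~\ref{thmmeaspure} to Theorem~\ref{thmmeasmono}.
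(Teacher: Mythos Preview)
Your proposal is correct and matches the paper's approach exactly: the paper does not even write out a separate proof, merely noting that ``as above, this proof generates a pair of additional consequences,'' i.e. Proposition~\ref{measmono} is obtained from Theorem~\ref{thmmeasmono} by dropping the sharp inequality $\kappa\sgeq\mu_\cl$ and with it the preaccessibility conclusion, precisely as Proposition~\ref{measpurechain} was obtained from Theorem~\ref{thmmeaspure}. Your detailed replay of Steps~1--3 and your alternative derivation via Proposition~\ref{measpurechain} are both faithful to, and more explicit than, what the paper records.
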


\section{Locality and chain closure}\label{seclocal}

We now turn to an examination of locality properties of Galois types, and must therefore introduce concreteness to the picture.  We restrict our attention to a category-theoretic framework slightly more general than the $\mu$-AECs of \cite{mu-aec-jpaa}, hence also more general than AECs.  We note that we also work slightly more generally than \cite{LRclass}, which is concerned with accessible categories with concrete directed colimits.

\begin{defi}
	By a {\em concrete $\lambda$-accessible category}, we mean a pair $(\ck,U)$, where $\ck$ is a $\lambda$-accessible category and $U:\ck\to\Set$ is a faithful functor that preserves $\lambda$-directed colimits.  We require, too, that $U$ preserve $\lambda$-presentable objects, i.e. if $M$ is $\lambda$-presentable, $|UM|<\lambda$.
\end{defi}

We will abuse this terminology slightly: in general, we will simply refer to $\ck$ itself as an concrete accessible category, leaving the forgetful functor $U$ implicit.  

\begin{remark} {\em By Fact~\ref{aecmuaeccats}, any $\mu$-AEC with L\"owenheim-Skolem-Tarski number $\lambda$ is a concrete $\lambda^+$-accessible category with concrete $\mu$-directed colimits.  As before, AECs correspond to the case $\mu=\aleph_0$.}
\end{remark}

As in \cite{LRclass}, we define Galois-type-theoretic notions in ways suited to the context of concrete accessible categories.  It will be abundantly clear, though, that they specialize to precisely the appropriate notions in, say, AECs.

\begin{defi}
	Let $\ck$ be a concrete accessible category, and let $M$ be an object of $\ck$.  A {\em Galois type} over $M$ is an equivalence class of pairs $(f,a)$, $f:M\to N$ and $a\in UN$, where we say that 
	$(f_0,a_0)\sim (f_1,a_1)$
	for $f_i:M\to N_i$ and $a_i\in UN_i$ just in case there is an object $N$ and morphisms $g_i:N_i\to N$ such that the following diagram commutes
	$$  
      \xymatrix@=3pc{
        N_0 \ar[r]^{g_0} & N \\
        M \ar [u]^{f_0} \ar [r]_{f_1} &
        N_1 \ar[u]_{g_1}
      }
      $$
      with $U(g_0)(a_0)=U(g_1)(a_1)$.
\end{defi}

We note that this need not be an equivalence relation in general, as transitivity may fail.  It does hold if we assume that $\ck$ has the {\em amalgamation property} (or {\em AP}); that is, any span $N_0\leftarrow M\to N_1$ in $\ck$ can be completed to a commutative square.  In the interest of transparency, we explicitly include this hypothesis in the results that follow: please note that the AP is also in the background of \cite{LRclass} and \cite{BT-R}, and the combinatorial AEC constructed in \cite{boun} has amalgamation, so there is not, in fact, any difference in the scope of our result.

\begin{defi}
	We say that Galois types in $\ck$ are {\em $\kappa$-local} if for any object $M$, any continuous $\kappa$-chain
	$$M_0\to M_1\to\dots\to M_i\to\dots$$
	with colimit $M$ and colimit coprojections $(\phi_i:M_i\to M)$, and any pair $(f_0:M\to N_0,a_0)$ and $(f_1:M\to N_1,a_1)$, if
	$$(\phi_i f_0,a_0)\sim (\phi_i f_1,a_1)$$ 
	for all $i<\kappa$, then
	$$(f_0,a_0)\sim (f_1,a_1)$$
\end{defi}

As in \cite{LRclass} or \cite{LiRo17}, the game is to transform $\kappa$-locality into a problem involving powerful images.  In fact, the category-theoretic formulation we use here is precisely the same, although we make one additional observation, Remark~\ref{typecatsrmk}(2) below, that is crucial to obtaining a one-cardinal version of our central result.

\begin{nota}\label{typecatsdef}
Let $\ck$ be a concrete $\lambda$-accessible category with amalgamation.
\begin{enumerate}
	\item Let $\cl_1$ be the category of diagrams witnessing equivalence of pairs in $\ck$, i.e. all commutative squares of the form 
	$$  
      \xymatrix@=3pc{
        N_0 \ar[r]^{g_0} & N \\
        M \ar [u]^{f_0} \ar [r]_{f_1} &
        N_1 \ar[u]_{g_1}
      }
    $$
    with selected elements $a_i\in UN_i$, $i=0,1$, and $U(g_0)(a_0)=U(g_1)(a_1)$.  The morphisms are, of course, morphisms of squares that preserve the selected elements.
   	\item Let $\cl_2$ be the category of pairs of types, i.e. the category of pointed spans of the form
   	$$  
      \xymatrix@=3pc{
        N_0  &  \\
        M \ar [u]^{f_0} \ar [r]_{f_1} &
        N_1 
      }
    $$
    with selected elements $a_i\in UN_i$, $i=0,1$.
    \item Let $F$ be the obvious forgetful functor from $\cl_1$ to $\cl_2$.
\end{enumerate}
\end{nota}

\begin{remark}\label{typecatsrmk} {\em 
	\begin{enumerate}
		\item As noted in the proof of \cite[5.2]{LRclass}, the full image of $F$ in $\cl_2$ consists precisely of the $\sim$-equivalent pairs $(f_i:M\to N_i,a_i\in UN_i)$, $i=0,1$.  The full image is clearly closed under subobjects, hence powerful.
		\item We recall from the same proof that $\cl_1$, $\cl_2$, and $F$ are all accessible.  In fact, one can see that $F$ is $\lambda$-accessible and preserves $\nu$-presentable objects for all $\nu\sgreat\lambda$---including $\mu_{\cl_2}$---and that $\mu_{\cl_2}=\mu_\ck$.
	\end{enumerate} }
\end{remark}

\begin{theo}\label{powfimlocal}
	Let $\ck$ be a concrete $\lambda$-accessible category with amalgamation.  If a cardinal $\kappa>\mu_\ck$ has the property that the powerful image of any $\lambda$-accessible functor $F:\cm\to\cl$ whose codomain satisfies $\mu_\cl<\kappa$ and that preserves $\mu_\cl$-presentable objects is closed under colimits of $\kappa$-chains, types in $\ck$ are $\kappa$-local.
\end{theo}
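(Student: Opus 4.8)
The plan is to run the reduction of $\kappa$-locality to a closure property of a powerful image in the spirit of \cite[5.2]{LRclass}, applying the hypothesis to the forgetful functor $F:\cl_1\to\cl_2$ of Notation~\ref{typecatsdef}, and then to observe that the colimit of a $\kappa$-chain of $\sim$-equivalent pairs of types is exactly the pair whose equivalence witnesses locality. The whole difficulty is concentrated in two places: checking that the assumed closure property is applicable to $F:\cl_1\to\cl_2$, and identifying the colimit of the relevant $\kappa$-chain in $\cl_2$.

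First I would verify that $F:\cl_1\to\cl_2$ meets the numerical requirements of the hypothesis. By Remark~\ref{typecatsrmk}(2), $\cl_1$ and $\cl_2$ are $\lambda$-accessible, $F$ is $\lambda$-accessible and preserves $\mu_{\cl_2}$-presentable objects, and $\mu_{\cl_2}=\mu_\ck$; since we assume $\kappa>\mu_\ck$, this yields $\mu_{\cl_2}<\kappa$. (This is precisely where the one-cardinal character of the statement is bought: it is because $\mu_{\cl_2}$ is not merely below $\kappa$ but \emph{equal} to $\mu_\ck$ that no second cardinal is required.) Hence the hypothesis applies to $F$, and $P(F)\subseteq\cl_2$ is closed under colimits of $\kappa$-chains. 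By Remark~\ref{typecatsrmk}(1), $P(F)$ is exactly the full image of $F$, i.e.\ the collection of pointed spans $N_0\xleftarrow{g_0}M'\xrightarrow{g_1}N_1$ with marked elements $a_i\in UN_i$ for which $(g_0,a_0)\sim(g_1,a_1)$.

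Now fix data as in the definition of $\kappa$-locality: an object $M$, a continuous $\kappa$-chain $(M_i)_{i<\kappa}$ with colimit $M$ and coprojections $\phi_i:M_i\to M$, and pairs $(f_0:M\to N_0,a_0)$, $(f_1:M\to N_1,a_1)$ with $(f_0\phi_i,a_0)\sim(f_1\phi_i,a_1)$ for every $i<\kappa$. For each $i$ let $P_i\in\cl_2$ be the pointed span $N_0\xleftarrow{f_0\phi_i}M_i\xrightarrow{f_1\phi_i}N_1$ with marked elements $a_0,a_1$. The connecting map $M_i\to M_{i+1}$, together with $\id_{N_0}$ and $\id_{N_1}$, is a morphism $P_i\to P_{i+1}$ in $\cl_2$, the commutativity of the two triangles being the identity $\phi_i=\phi_{i+1}\circ(M_i\to M_{i+1})$. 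This produces a $\kappa$-chain $(P_i)_{i<\kappa}$ in $\cl_2$, which is nondegenerate because $(M_i)$ is: a cofinal subchain of $(P_i)$ has colimit with apex $\colim$ of the corresponding subchain of $(M_i)$, which must therefore be cofinal in $(M_i)$, forcing the index set to have size at least $\cof(\kappa)$. Each $P_i$ lies in $P(F)$, by the locality hypothesis $(f_0\phi_i,a_0)\sim(f_1\phi_i,a_1)$.

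Finally, the chain $(P_i)_{i<\kappa}$ is $\lambda$-directed (it has regular length $\kappa>\mu_\ck>\lambda$; this is the case in all applications, and at worst one only needs the colimit to exist in $\cl_2$), so $\colim_{i<\kappa}P_i$ exists in the $\lambda$-accessible category $\cl_2$, and since objects of $\cl_2$ are spans in $\ck$ decorated with marked elements, this colimit is computed vertexwise, with the (here constant) marked elements carried along using that the forgetful functor of $\ck$ preserves $\lambda$-directed colimits. Hence its apex is $\colim_{i<\kappa}M_i=M$, its legs are $N_0$ and $N_1$ with induced maps $f_0$ and $f_1$, and its marked elements are $a_0,a_1$; that is, $\colim_{i<\kappa}P_i$ is the pointed span $N_0\xleftarrow{f_0}M\xrightarrow{f_1}N_1$ with marked elements $a_0,a_1$. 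By closure of $P(F)$ under colimits of $\kappa$-chains, this object lies in $P(F)$, which by Remark~\ref{typecatsrmk}(1) says precisely that $(f_0,a_0)\sim(f_1,a_1)$; so types in $\ck$ are $\kappa$-local. I expect the step demanding the most care to be this last identification of $\colim_{i<\kappa}P_i$ — confirming that $\cl_2$ admits the relevant colimit and that it is assembled componentwise from the colimit of $(M_i)$ in $\ck$ with the marked data transported correctly — the remainder being the bookkeeping of Remark~\ref{typecatsrmk} needed to legitimately invoke the closure hypothesis for $F:\cl_1\to\cl_2$.
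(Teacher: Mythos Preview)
Your argument is correct and follows essentially the same route as the paper's own proof: apply the hypothesis to the forgetful functor $F:\cl_1\to\cl_2$ of Notation~\ref{typecatsdef} via Remark~\ref{typecatsrmk}, note that the pointed spans over the $M_i$ form a $\kappa$-chain in $P(F)$, and identify its colimit in $\cl_2$ as the span over $M$. You supply more detail than the paper does---in particular on the verification that $\mu_{\cl_2}=\mu_\ck<\kappa$, on nondegeneracy, and on the vertexwise computation of the colimit---but the strategy is identical.
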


\begin{proof}
	By Remark~\ref{typecatsrmk}(2), the powerful image of $F:\cl_1\to\cl_2$ is closed under colimits of $\kappa$-chains.  By Remark~\ref{typecatsrmk}(1), the powerful image is precisely the category of equivalent pairs.
	
	Let $M$ be an object of $\ck$, and suppose that $M$ is the colimit of a continuous $\kappa$-chain
	$$M_0\to M_1\to\dots\to M_i\to\dots$$
	with colimit coprojections $\phi_i:M_i\to M$, $i<\kappa$.  Consider a pair of (representatives of) types
	$$
	\xymatrix@=1pc{
         & N_0 & (a_0\in UN_0)\\
        M \ar [ur]^{f_0} \ar [dr]_{f_1} & & \\
        & N_1 & (a_1\in UN_1)
      }
    $$
	Suppose that they are equivalent over any $M_i$; that is, for any $i<\kappa$, there are $g_0^i:N_0\to N^i$ and $g_1^i:N_1\to N^i$ such that we have the following diagram
	$$
	\xymatrix@=2pc{
         &&&&&& N_0\ar[dr]^{g_0^i} & \\
       M_0\ar[r] & M_1\ar[r] & \dots\ar[r] & M_i\ar[r]\ar@/^/[urrr]^{f_0\phi_i}\ar@/_/[drrr]_{f_1\phi_i} & \dots\ar[r] &  M \ar [ur]_{f_0} \ar [dr]^{f_1} & & N^i\\
        &&&&&& N_1\ar[ur]_{g_1^i} & 
      }
    $$
    with $g_0^if_0\phi_i=g_1^if_1\phi_i$ and $U(g_0)(a_0)=U(g_1)(a_1)$.  In other words,
    $$
	\xymatrix@=1pc{
         & N_0 & (a_0\in UN_0)\\
        M_i \ar [ur]^{f_0\phi_i} \ar [dr]_{f_1\phi_1} & & \\
        & N_1 & (a_1\in UN_1)
      }
    $$
    belongs to the powerful image of $F$ for all $i<\kappa$.  Since the powerful image of $F$ is closed under colimits of $\kappa$-chains (see Corollary~\ref{measmono}), and since the original pair of types is clearly the colimit of the $\kappa$-chain of pairs of the form immediately above, that original pair must belong to the powerful image as well.  That is, $(f_0,a_0)$ and $(f_1,a_1)$ are equivalent.
\end{proof}

As a special case:

\begin{thm}\label{thmmuaec}
	Let $\ck$ be a $\nu$-AEC with amalgamation, $\lsnk=\lambda$.  If there is a cardinal $\kappa>\mu_\ck$ such that the powerful image of any $\lambda^+$-accessible functor $F:\cm\to\cl$ whose codomain satisfies $\mu_\cl<\kappa$ and preserves $\mu_\cl$-presentable objects is closed under colimits of $\kappa$-chains, then $\ck$ is $\kappa$-local.
\end{thm}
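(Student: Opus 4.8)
The plan is to read Theorem~\ref{thmmuaec} off Theorem~\ref{powfimlocal} directly; all that is needed is to present the $\nu$-AEC $\ck$ in the appropriate categorical form. First I would invoke Fact~\ref{aecmuaeccats}(2) (equivalently, the remark following the definition of a concrete $\lambda$-accessible category): taking $\ck$-embeddings as morphisms and the underlying-set functor $U\colon\ck\to\Set$, the category $\ck$ is $\lambda^+$-accessible, every morphism is a monomorphism, and $U$ preserves $\nu$-directed colimits and $\nu$-presentable objects. Since $\lambda=\lsnk\geq\nu$, the functor $U$ a fortiori preserves $\lambda^+$-directed colimits and $\lambda^+$-presentable objects, so $(\ck,U)$ is a concrete $\lambda^+$-accessible category in the sense of Section~\ref{seclocal}. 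Moreover, the amalgamation property of the $\nu$-AEC is, word for word, the statement that every span $N_0\leftarrow M\to N_1$ of $\ck$-embeddings completes to a commuting square, which is precisely the AP hypothesis of Theorem~\ref{powfimlocal}.

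Next I would observe that, with $\ck$ regarded as $\lambda^+$-accessible, the invariant $\mu_\ck$ of Definition~\ref{sharpdef} is computed relative to $\lambda^+$. Consequently the hypothesis of Theorem~\ref{thmmuaec}---that for every $\lambda^+$-accessible functor $F\colon\cm\to\cl$ with $\mu_\cl<\kappa$ preserving $\mu_\cl$-presentable objects, the powerful image of $F$ is closed under colimits of $\kappa$-chains---is literally the hypothesis of Theorem~\ref{powfimlocal} with the base accessibility rank $\lambda$ of that statement read as $\lambda^+$. Applying Theorem~\ref{powfimlocal} to $(\ck,U)$ then gives that Galois types in $\ck$ are $\kappa$-local, i.e.\ that $\ck$ is $\kappa$-local; specializing to $\nu=\aleph_0$ recovers the corresponding statement for ordinary AECs with $\lsnk=\lambda$.

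I do not anticipate a real obstacle, since the substance of the argument already resides in Theorem~\ref{powfimlocal} and only bookkeeping remains. The points deserving care are (i) that the shift of accessibility rank from $\lambda$ to $\lambda^+$ is propagated uniformly through $\mu_\ck$ and through the clause ``preserves $\mu_\cl$-presentable objects'' in both statements, and (ii) that $U$ genuinely preserves $\lambda^+$-presentable objects, so that $(\ck,U)$ is concrete in the strict sense of Section~\ref{seclocal}---immediate for AECs by Fact~\ref{aecmuaeccats}(1), and inherited for $\nu$-AECs from the counting estimates cited in Fact~\ref{aecmuaeccats}(2). One should also remark, so that $\kappa$-locality is not vacuous, that $\ck$ admits colimits of the continuous $\kappa$-chains entering its definition; this holds whenever $\cof(\kappa)\geq\nu$, and in particular in the intended application, where $\kappa$ is almost measurable and hence regular (Corollary~\ref{almostmeasreg}).
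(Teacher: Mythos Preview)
Your proposal is correct and follows exactly the paper's approach: the paper presents Theorem~\ref{thmmuaec} merely ``as a special case'' of Theorem~\ref{powfimlocal}, without a separate proof, and your write-up supplies precisely the bookkeeping (via Fact~\ref{aecmuaeccats}(2)) needed to see that a $\nu$-AEC with $\lsnk=\lambda$ is a concrete $\lambda^+$-accessible category with amalgamation, so that Theorem~\ref{powfimlocal} applies with base rank $\lambda^+$. Your additional remarks on the preservation of $\lambda^+$-presentables by $U$ and on the non-vacuity of $\kappa$-locality are careful touches that go slightly beyond what the paper spells out, but are consistent with its intent.
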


In particular, this holds for AECs, which correspond to the case $\nu=\aleph_0$.

\section{Chain closure as a large cardinal property} 

We now close the chain of implications.  We require the following result of \cite{boun}:

\begin{thm}[\cite{boun}, 4.9(1)]\label{bounsigmaplus} Let $\lambda<\kappa$ be infinite cardinals with $\lambda^{\omega}=\lambda$.  If every AEC $\ck$ with amalgamation and $\lsnk=\lambda$ is $\kappa$-local, $\kappa$ is $\lambda^+$-measurable.
\end{thm}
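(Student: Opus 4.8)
The plan is to establish the contrapositive: assuming $\kappa$ is \emph{not} $\lambda^+$-measurable, we build an AEC $\ck$ with amalgamation and $\lsnk=\lambda$ that fails $\kappa$-locality. This is the strategy of \cite{shelahmeas}, refined so that the only set-theoretic input is the nonexistence of a uniform $\lambda^+$-complete ultrafilter on $\kappa$. We use the standard reformulation of that hypothesis: for every uniform ultrafilter $\cu$ on $\kappa$ there is a partition $\{A_\alpha:\alpha<\lambda\}$ of $\kappa$ with $A_\alpha\notin\cu$ for every $\alpha<\lambda$.

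First I would set up the combinatorial AEC. Its models are sets carrying (i) a coloring into $\lambda$ classes; (ii) a tree-like skeleton of height $\kappa$ recording the order in which elements enter along a chain, together with $\omega$-indexed auxiliary data --- it is precisely this data, of total size $\lambda^{\omega}=\lambda$, that lets us hold $\lsnk$ at $\lambda$; and (iii) ``link'' relations witnessing the potential identifications of elements in extensions. Morphisms are the evident structure-preserving embeddings. Checking the AEC axioms is routine: directed unions of models are models, and the coherence axiom holds, because the structure is finitary; the L\"owenheim--Skolem--Tarski number comes out $\lambda$ by the bound on the $\omega$-ary data; and amalgamation is designed in, since any span $N_0\leftarrow M\to N_1$ is completed by the free amalgam, which adds no unforced links.

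Then I would exhibit the failure of locality. Fix a continuous $\kappa$-chain $M_0\to M_1\to\cdots$ in which $M_i$ introduces the $i$-th block of the skeleton, with colimit $M$ of size $\kappa$ and coprojections $\phi_i$. Form two extensions $f_0:M\to N_0$ and $f_1:M\to N_1$ with distinguished elements $a_0\in UN_0$ and $a_1\in UN_1$, where $a_0$ is linked to the elements of $M$ levelwise along the skeleton while $a_1$ is linked to them through a single branch. For each $i<\kappa$ the pairs $(\phi_if_0,a_0)$ and $(\phi_if_1,a_1)$ are equivalent, because amalgamating over the piece $M_i$ --- which has size $<\kappa$ --- only requires reconciling the links visible inside $M_i$, and this the free amalgam does. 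So the two Galois types agree over every $M_i$.

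The step I expect to be the main obstacle is to show $(f_0,a_0)\not\sim(f_1,a_1)$ from the failure of $\lambda^+$-measurability. Suppose $N$ amalgamated $N_0$ and $N_1$ over $M$ with $U(g_0)(a_0)=U(g_1)(a_1)=a\in UN$. Reading off the link structure around $a$, the set $\cu$ of skeleton-levels $i<\kappa$ on which the levelwise links of $a_0$ and the branch links of $a_1$ are rendered compatible in $N$ is, by the coherence and amalgamation axioms, a filter on $\kappa$; it is uniform by the continuity and properness of the chain; it is $\lambda^+$-complete, because the single color received by $a$ selects, from any partition of $\kappa$ into $\leq\lambda$ pieces, one piece forced to lie in $\cu$; and the presence of enough ``test'' elements --- indexed by the $\lambda^{\omega}=\lambda$ auxiliary data --- forces $\cu$ to be an ultrafilter. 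This contradicts the assumption on $\kappa$. Arranging the construction so that $\cu$ emerges \emph{simultaneously} uniform, $\lambda^+$-complete, and ultra, while $\ck$ still has amalgamation and $\lsnk=\lambda$, is the delicate part; it is exactly the sharpening of \cite{shelahmeas} carried out in \cite{boun}.
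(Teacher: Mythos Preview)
The paper does not prove this statement at all: it is quoted verbatim from \cite[4.9(1)]{boun} and used as a black box in the proof of Theorem~\ref{almeasequiv}. There is thus no ``paper's own proof'' to compare your proposal against.

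That said, your outline is in the spirit of the argument in \cite{boun}, which does proceed by the contrapositive and builds a combinatorial AEC in the style of \cite{shelahmeas}. Your sketch correctly identifies the architecture---a tree-like skeleton of height $\kappa$, two types that agree over each $M_i$ but whose amalgamation over $M$ would produce a forbidden ultrafilter---and the role of the hypothesis $\lambda^\omega=\lambda$ in pinning $\lsnk$ at $\lambda$. However, the passage from ``$N$ amalgamates $N_0$ and $N_1$ over $M$'' to ``$\cu$ is a uniform $\lambda^+$-complete ultrafilter'' is where essentially all the work lies, and your description of it is schematic rather than a proof: in the actual construction the models carry carefully chosen functions (not merely colorings and links) so that any amalgam is forced to compute, for each subset of $\kappa$ coded into the structure, whether it or its complement is ``large,'' and $\lambda^+$-completeness is extracted from a specific family of partitions built into the language. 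Getting amalgamation to hold \emph{simultaneously} with this coding is the delicate point you flag, and it is not something one can wave through. If you intend to supply a proof rather than a citation, you will need to reproduce that machinery in detail.
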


In the style of \cite[4.10]{boun}, this yields the following:

\begin{cor}\label{bounimplicit}
	Let $\kappa$ be an infinite cardinal with $\mu^{\omega}<\kappa$ for all $\mu<\kappa$.  If every AEC $\ck$ with amalgamation and $\lsnk<\kappa$ is $\kappa$-local, $\kappa$ is almost measurable.
\end{cor}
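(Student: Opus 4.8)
The plan is to derive this from Theorem~\ref{bounsigmaplus} by a level-by-level argument, showing that $\kappa$ is $\mu$-measurable for each $\mu < \kappa$ separately. Fix $\mu < \kappa$; we want a uniform $\mu$-complete ultrafilter on $\kappa$. Since $\kappa$ is a limit cardinal under the hypothesis $\nu^\omega < \kappa$ for all $\nu < \kappa$ (indeed a strong limit, or at least closed under $\nu \mapsto \nu^\omega$), we may choose a cardinal $\lambda$ with $\mu \le \lambda^+ < \kappa$ and with $\lambda^\omega = \lambda$: concretely, take $\lambda = \nu^\omega$ for some $\nu$ with $\mu \le \nu < \kappa$, so that $\lambda^\omega = \nu^{\omega\cdot\omega} = \nu^\omega = \lambda$, and $\lambda < \kappa$ forces $\lambda^+ \le \kappa$; if $\lambda^+ = \kappa$ one instead pushes $\nu$ a little higher, using that $\kappa$ is a limit cardinal, to guarantee $\lambda^+ < \kappa$. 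This $\lambda$ satisfies the hypotheses $\lambda < \kappa$ and $\lambda^\omega = \lambda$ required by Theorem~\ref{bounsigmaplus}.

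Next I would invoke the hypothesis of the corollary. Any AEC $\ck$ with amalgamation and $\LS(\ck) = \lambda$ has $\LS(\ck) = \lambda < \kappa$, so by assumption it is $\kappa$-local. In particular \emph{every} AEC with amalgamation and L\"owenheim--Skolem--Tarski number exactly $\lambda$ is $\kappa$-local, which is precisely the hypothesis of Theorem~\ref{bounsigmaplus} applied to the pair $\lambda < \kappa$. That theorem then yields that $\kappa$ is $\lambda^+$-measurable, i.e.\ there is a uniform $\lambda^+$-complete ultrafilter on $\kappa$. Since $\mu \le \lambda^+$, a $\lambda^+$-complete ultrafilter is a fortiori $\mu$-complete, so $\kappa$ is $\mu$-measurable.

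As $\mu < \kappa$ was arbitrary, $\kappa$ is $\mu$-measurable for all $\mu < \kappa$, which is the definition of almost measurable (Definition~\ref{defmeasurable}(2)). The main obstacle is purely arithmetic: arranging a single $\lambda$ that simultaneously dominates $\mu$, stays strictly below $\kappa$, and is closed under $\omega$-powers. This works because the corollary's hypothesis $\nu^\omega < \kappa$ forces $\kappa$ to be a limit cardinal closed under the operation $\nu \mapsto \nu^\omega$, giving ample room to choose such a $\lambda$; one must only be slightly careful, when the first candidate $\lambda$ has $\lambda^+ = \kappa$, to replace it by a larger $\nu^\omega$ below $\kappa$, which is possible exactly because $\kappa$ is not a successor cardinal under this hypothesis. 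Everything else is a direct citation of Theorem~\ref{bounsigmaplus} together with the trivial observation that $\lambda^+$-completeness implies $\mu$-completeness when $\mu \le \lambda^+$.
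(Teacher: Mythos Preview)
Your approach is correct and matches the paper's: the paper gives no explicit argument, merely noting that the corollary follows from Theorem~\ref{bounsigmaplus} ``in the style of \cite[4.10]{boun},'' which is precisely the level-by-level reduction you carry out.

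One minor correction: your claim that the hypothesis $\nu^\omega<\kappa$ for all $\nu<\kappa$ forces $\kappa$ to be a limit cardinal is false (take $\kappa=(2^\omega)^+$), and your attempt to arrange $\lambda^+<\kappa$ is both flawed and unnecessary. Theorem~\ref{bounsigmaplus} only requires $\lambda<\kappa$ and $\lambda^\omega=\lambda$; it does \emph{not} require $\lambda^+<\kappa$. So simply set $\lambda=\mu^\omega$: then $\lambda<\kappa$ by hypothesis, $\lambda^\omega=\lambda$, and $\mu\leq\lambda<\lambda^+$. The theorem gives $\lambda^+$-measurability of $\kappa$, hence $\mu$-measurability, and you are done---no case analysis on whether $\lambda^+=\kappa$ is needed. (Indeed, if $\lambda^+=\kappa$ you would conclude that $\kappa$ is outright measurable, which is more than enough.)
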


This, together with the work of previous sections, gives us the main result:

\begin{thm}\label{almeasequiv}
	Let $\kappa$ be a strong limit cardinal.  The following are equivalent:
	\begin{enumerate}
		\item $\kappa$ is almost measurable.
		\item Let $\lambda$ be a regular cardinal.  For any $\lambda$-accessible functor $F:\ck\to\cl$ with $\mu_\cl<\kappa$ that preserves $\mu_\cl$-presentable objects, the powerful image of $F$ is $\kappa$-preaccessible and closed under colimits of $\kappa$-chains in $\cl$.
		\item Any ($\nu$-)AEC $\ck$ with amalgamation and $\lsnk<\kappa$ is $\kappa$-local.
	\end{enumerate}
\end{thm}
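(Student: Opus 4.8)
The plan is to run the cycle $(1)\Rightarrow(2)\Rightarrow(3)\Rightarrow(1)$; each arrow reduces to a result already established in the preceding sections, and the only genuinely new ingredient is a routine cardinal-arithmetic check, which is precisely where the standing hypothesis that $\kappa$ is a strong limit enters.

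The implication $(1)\Rightarrow(2)$ I would simply read off Proposition~\ref{almeasmono}: given a regular $\lambda$ and a $\lambda$-accessible category $\cl$ with $\mu_\cl<\kappa$, the almost measurable cardinal $\kappa$ itself witnesses $\kappa>\mu_\cl$, so the powerful image of any $\lambda$-accessible functor to $\cl$ that preserves $\mu_\cl$-presentable objects is $\kappa$-preaccessible and closed under colimits of $\kappa$-chains. (The strong limit hypothesis plays no role here.)

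For $(2)\Rightarrow(3)$ I would observe that hypothesis (2), instantiated at the regular cardinal $\lsnk^+$ of the $\nu$-AEC $\ck$ at hand, is verbatim the powerful-image condition in the antecedent of Theorem~\ref{thmmuaec} (which needs only closure under colimits of $\kappa$-chains, not preaccessibility). The only thing left to supply is that $\mu_\ck<\kappa$ whenever $\ck$ is a $\nu$-AEC with $\lsnk<\kappa$; granting this, Theorem~\ref{thmmuaec} at once yields that $\ck$ is $\kappa$-local. To see $\mu_\ck<\kappa$: by the presentability bounds recorded in Fact~\ref{aecmuaeccats}, $|\pres{\lsnk^+}{\ck}|$ is bounded by a finite iteration of cardinal exponentiation applied to $\lsnk$ (in the $\nu$-AEC case by a count of $\nu$-ary structures on a set whose size is itself such an iterate); since $\lsnk<\kappa$ and $\kappa$ is a strong limit cardinal---hence a limit cardinal---all of these iterates remain below $\kappa$. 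Consequently $\gamma_\ck<\kappa$ (Definition~\ref{sharpdef}), and then $\mu_\ck=(\gamma_\ck^{<\gamma_\ck})^+\le(2^{\gamma_\ck})^+<\kappa$.

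For $(3)\Rightarrow(1)$ I would specialize (3) to $\nu=\aleph_0$, so that every AEC with amalgamation and $\lsnk<\kappa$ is $\kappa$-local; since $\kappa$ is a strong limit we have $\mu^\omega\le 2^\mu<\kappa$ for all $\mu<\kappa$, so Corollary~\ref{bounimplicit} applies and concludes that $\kappa$ is almost measurable, closing the cycle. The one step I expect to require care is the cardinal arithmetic in $(2)\Rightarrow(3)$: tracing the bounds of Fact~\ref{aecmuaeccats} through Definition~\ref{sharpdef} to confirm that ``$\kappa$ a strong limit'' is exactly what turns ``$\lsnk<\kappa$'' into ``$\mu_\ck<\kappa$''. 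Everything else is bookkeeping, quoting Proposition~\ref{almeasmono}, Theorem~\ref{thmmuaec}, and Corollary~\ref{bounimplicit} in order.
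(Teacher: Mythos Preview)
Your proposal is correct and follows essentially the same cycle $(1)\Rightarrow(2)\Rightarrow(3)\Rightarrow(1)$ as the paper, invoking Proposition~\ref{almeasmono}, Theorem~\ref{thmmuaec}, and Corollary~\ref{bounimplicit} in turn. Your treatment of $(2)\Rightarrow(3)$ is in fact more explicit than the paper's: the paper simply cites Fact~\ref{aecmuaeccats}(2) and Definition~\ref{sharpdef} to obtain $\mu_\ck<\kappa$ from $\lsnk<\kappa$, whereas you spell out how the strong limit hypothesis pushes the relevant iterated exponentials below $\kappa$, which is exactly the content that citation is meant to convey.
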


\begin{proof}
	$(1)\Rightarrow (2)$: Corollary~\ref{almeasmono}.
	
	$(2)\Rightarrow (3)$: Note that if $\lsnk<\kappa$, $\mu_\ck<\kappa$ by Fact~\ref{aecmuaeccats}(2) and Definition~\ref{sharpdef}.  So Theorem~\ref{thmmuaec} applies.
	
	$(3)\Rightarrow (1)$: Since $\kappa$ is a strong limit cardinal, $\mu^\omega<\kappa$ for all $\mu<\kappa$, so we may use Corollary~\ref{bounimplicit}.
\end{proof}

\begin{remark}  {\em 
\begin{enumerate}
\item We note that the implication $(1)\Rightarrow (3)$ follows from \cite[5.2]{boneylarge}.	
\item The equivalence of Theorem~\ref{almeasequiv} fits a still-emerging picture of (possibly weakened) measurability as a kind of chain completeness or compactness property.  A noteworthy early variation on this theme appears as Exercise 4.26 in \cite{changkeis}, and was recently resurrected as \cite[Fact~1.2]{boneymodlcs}: a cardinal $\kappa$ is measurable just in case every $L_{\kappa,\kappa}$-theory that can be written as the union of an increasing sequence of satisfiable theories is  itself satisfiable.
\end{enumerate} }
\end{remark}

It is tempting to try to produce an analogous version of Theorem~\ref{almeasequiv} involving $\mu$-measurability rather than almost measurability, using Theorem~\ref{bounsigmaplus} in place of Corollary~\ref{bounimplicit}. The difficulty lies in the gap between $\mu_\cl$ and $\lsnk^+$, which are, respectively, the degree of measurability needed to begin the chain of implications, and the degree of measurability that we can infer from locality---in any AEC $\ck$, $\mu_\ck$ will be considerably larger than $\lsnk^+$.  This gap can likely be narrowed, if not closed: in this account, we have used far less than the full structure of an AEC (or $\mu$-AEC).  The fact that all morphisms are (concrete) monomorphisms alone should yield a much simpler result along the lines of Theorem~\ref{thmmeasmono}, with a smaller cardinal in place of $\mu_\cl$.  This may be all that is required.

\bibliographystyle{alpha}
\bibliography{measurable-accessible.bib}

\end{document}